 \newcommand{\R}{\mathbb R}
\newcommand{\E}{\mathbb E}
\newcommand{\W}{\mathcal W}
\def\LL{\mathcal L}
\newcommand{\e}{\varepsilon}
\newcommand{\tr}{\mathrm{tr}}
\renewcommand{\sc}{\mathrm{sc}}
\newcommand{\rc}{\mathrm{rc}}
\def\d{\mathrm d}
 \newcommand{\comment}[1]{}
\numberwithin{equation}{section}
\newtheorem{theorem}{Theorem}[section]
\newtheorem{proposition}[theorem]{Proposition}
\newtheorem{lemma}[theorem]{Lemma}
\newtheorem{assumption}[theorem]{Assumption}
\theoremstyle{definition}
\newtheorem{rem}[theorem]{Remark}
\title{Quantitative Contraction Rates for McKean-Vlasov Stochastic Differential Equations with Multiplicative Noise}
\author[1]{Dan Noelck}
\affil[1]{Department of Applied Mathematics, Illinois Institute of Technology, Chicago, IL 60616. {\tt dnoelck@iit.edu},
}
\date{\today}
\begin{document}

\maketitle
\begin{abstract}
This work focuses on the quantitative contraction rates for McKean-Vlasov  stochastic differential equations (SDEs) with multiplicative noise.
 Under suitable conditions on the coefficients of the SDE, this paper derives explicit quantitative contraction rates for the convergence in  Wasserstein distances of McKean-Vlasov SDEs using the coupling method.
 The contraction results are then used to prove a  propagation of chaos uniformly in time,
  which
 provides quantitative bounds on convergence rate of interacting particle systems, and establishes exponential ergodicity for McKean-Vlasov SDEs.

 \bigskip
{\bf Keywords.}  McKean-Vlasov  stochastic differential equations, contraction, exponential ergodicity, coupling, propagation of chaos.

\bigskip
{\bf Mathematics Subject Classification.}  60J25, 60H10, 60J60.
\end{abstract}

\section{Introduction}

We consider the McKean-Vlasov stochastic differential equation (SDE)
\begin{equation} \label{SDE 1}
    \begin{aligned}
    \d X_t&=b(X_t,\mu_t)\d t+\sigma (X_t)\d W_t, \\
    \mu_t&=\mathcal{L}(X_t),
    \end{aligned}
\end{equation}
where $W$ is a $d$-dimensional Brownian motion,  $b: \mathbb{R}^d\times \mathcal{P}_1(\mathbb{R}^d)\to \mathbb{R}^d$, $\sigma:\mathbb{R}^d\to \mathbb{R}^{d\times d}$, and $\mu_{t} = \mathcal{L}(X_t)$ is the law of the random variable $X_t$. The precise conditions on $b$ and $\sigma$
guaranteeing
\eqref{SDE 1} has a unique strong solution will be given in what follows.
Equation \eqref{SDE 1}  is an 
SDE whose drift coefficient
depends  not only on the process itself but also on the probability distribution of the process at time $t$. Note also that the diffusion coefficient of   \eqref{SDE 1} is state-dependent. This type of equation naturally  arises as the limit of system of interacting ``particles.''  Consider, for example,  a large number of players with symmetric dynamics   whose positions depend on the positions of the mean fields of other players.
 As the number of players tends to infinity, the limit dynamic of each player does not depend on the positions of the others anymore but only on their statistical distributions thanks to the law of large numbers. The resulting limit system is a McKean-Vlasov process described by equation \eqref{SDE 1}. The study of McKean-Vlasov SDEs was
initiated in \cite{McKean-66,McKean-67} and further developed in \cite{Funaki-84,Sznitman-91}.
Since the introduction of mean-field games independently by \cite{HuangMC-06} and \cite{LasryL-07}, McKean-Vlasov SDEs have received growing attention; some of the recent developments in this direction can be found in \cite{DingQ-21,Chau-20,Wang-18} and the references therein. We also refer to the survey paper \cite{HuangRW-21} for  an extensive review of the  recent progresses in the study of McKean-Vlasov SDEs.

The investigation of convergence to equilibrium of Markov processes is of central
interest and vital importance. Crucial
questions include: When does an invariant measure exist? Is it unique? What is the convergence rate? For classical SDEs, these questions have been extensively investigated in the literature. Many different methods have been developed to approach these questions as well. In particular, the classical Harris Theorem gives geometric ergodicity under a minimization  and a Lyapunov drift conditions. We refer to \cite{Khasminskii-60,Khasminskii-12} for diffusion processes and \cite{MeynT-93II,MeynT-93III}  for general Markov processes; see also   \cite{HairerM-11,HairMS-11} for  some new perspectives of the Harris Theorem.  The  coupling method is a simple but powerful probabilistic tool for the study of convergence of Markov processes.
 The classical treatments of the coupling method for diffusion processes can be found in  \cite{LindR-86,ChenLi-89,PriolaW-06} among others and was further   developed    in  \cite{ShaoX-13,BaoYY-14,CloezH-15,NguyenY-18}   for regime-switching diffusion processes, \cite{LiangW-20,Majka-17} for stochastic differential equation driven by L\`evy processes,  and \cite{BaoYY-14,WangWYZ-22} for functional stochastic differential equations. 
 The books \cite{Chen04} and \cite{Lindvall} contain  unified and comprehensive 
treatment
of the coupling method.     

   Nevertheless, little
  is known for the convergence of McKean-Vlasov processes partly due to the delicate  and subtle distribution dependence in their dynamics.
   \cite{HammSS-21} presents a sufficient condition for the existence of an invariant measure for McKean-Vlasov SDEs,  but the issues of uniqueness and convergence rate were not addressed in the paper.
   It  should also be mentioned that  \cite{Wang-18}  derives exponential ergodicity in  Wasserstein-2 distance  for general McKean-Vlasov SDEs under certain dissipative conditions.  In contrast to \cite{Wang-18},
this work aims to {\em quantify}  the contraction rate to   the invariant measure of   \eqref{SDE 1} in   Wasserstein-1 and a multiplicative Wasserstein distances using the coupling method under
contractivity and dissipativity.
However, owing to consideration of the contraction rate we need a slightly stronger condition with the diffusion coefficient not depending on the distribution.  This allows us to construct an appropriate coupling.
While  the coupling method  has been well-developed for classical SDEs, care must be taken when dealing with McKean-Vlasov SDEs. As pointed out in \cite{EberGZ:19} and  \cite{HammSS-21}, solutions to \eqref{SDE 1} with different initial laws follow different dynamics with different drift and diffusion coefficients. Consequently, even if one can find a finite coupling time $T$, it is  not clear
how one can
construct a coupling $(X_{t}, Y_{t})$ satisfying $X_{t} = Y_{t}$ for all $t\ge T$.

We adopt the coupling method in  \cite{EberGZ:19}, which deals with classical SDEs and McKean-Vlasov SDEs with additive noise, to derive quantitative Harris-type theorems for McKean-Vlasov processes with {\em multiplicative noise} and general drift coefficients.
Using a similar
coupling method as that of \cite{EberGZ:19}
we obtain {\em explicit} quantitative contraction  rates for the  McKean-Vlasov SDE given in \eqref{SDE 1}.
The essence of our approach uses
a mixed coupling. Such a method uses
a synchronous coupling when the two marginal processes are close to each other and a reflection coupling when they    are farther apart. Such an idea was also used in \cite{Zimmer-17} for infinite-dimensional SDEs with additive noise.
Our first result, Theorem \ref{Contraction result},  shows a contraction in the Wasserstein-1 distance under
contractivity at infinity
along with certain perturbation bounds on the diffusion coefficient $\sigma$.
This result is then used to derive a  uniformly in time  propagation of chaos bound in Proposition \ref{prop-chaos} and
further obtain the existence of a unique  invariant measure and exponential ergodicity with an additional growth condition on the drift $b$ in Theorem \ref{invariant measure}.  
We next present   a local contraction result using a multiplicative Wasserstein distance that depends on the initial law  under a dissipative drift condition in  Theorem \ref{Contraction result 2}.  This contraction result   also leads to a uniform in time propagation of chaos bound along with the exponential ergodicity for the process \eqref{SDE 1}; see Proposition \ref{prop-chaos 2} and Theorem \ref{contraction to invariant measure 2}.
In   Theorems \ref{Contraction result} and \ref{Contraction result 2}, the   contraction rates can be computed from the coefficients $b$ and $\sigma$ explicitly.

 While both Theorems \ref{Contraction result} and \ref{Contraction result 2} are derived using the coupling method, it is worth pointing out   the difference between the {\em contraction at infinity} and the {\em dissipative drift conditions}.  Roughly,
  contraction at infinity pertains to the interplay among the net drift of the marginal processes, which tends to  push  them  closer to each other when they diverge far apart.
  The dissipative condition, 
  however, 
  ensures the marginal process is steered back towards the origin if it strays too far from it.

The paper is organized as follows. Section \ref{sect-prelim}    introduces the coupling for \eqref{SDE 1}, which is essential for our analyses throughout the paper. Several technical lemmas are also
presented in Section \ref{sect-prelim}.
Section \ref{sec-main} presents our main contraction results.  
These contraction results are
used to derive a propagation of chaos result as well as exponential ergodicity for \eqref{SDE 1} in Section \ref{sect-applications}.  Appendix \ref{sect-appendix} contains the proofs of several lemmas.

Throughout this paper, we denote by  $\langle \cdot, \cdot \rangle$ and $|\cdot|$ the Euclidean inner product and the corresponding norm, respectively.  Furthermore, $\|\cdot\|$ is the matrix norm induced by the Euclidean norm.
  If  $(E,\rho)$ is a Polish space, $\mathcal P(E)$ is the collection of probability measures on $E$ and $\mathcal P_{1}(E)$ is the subset of $\mathcal P(E)$ with finite first absolute moment. Furthermore, we write $\mu(|\cdot|) : = \int_{\R^{d}} |x|\mu(dx)$  if $\mu\in \mathcal P_{1}(\R^{d})$.
We   define as in \cite{Villani-09} the \textit{Wasserstein distance} of two probability measures $\mu,\nu \in \mathcal{P}(E)$  on a Polish metric space $(E,\rho)$
 by
\[\mathcal{W}_{\rho,p}(\mu,\nu)=\inf_{\pi \in \Pi(\mu,\nu)}\left[\int_{E \times E} \rho(x,y)^p\pi(\d x,\d y)\right]^{1/p},\]
where $p\in[0,\infty)$ and $\Pi(\mu,\nu)$ is the collection of coupling measures,  i.e., $\pi \in \Pi(\mu, \nu)$ for $\mu,\nu\in\mathcal{P}(E)$ if $\pi \in \mathcal P(E\times E)$, $\pi(A\times E)=\mu(A)$ and $\pi(E \times A)=\nu(A)$ for every Borel set $A\subset E$.  When $p=1$ we  simply write $\W_{\rho,1}(\mu,\nu)=\W_{\rho}(\mu,\nu)$.  In fact, the Wasserstein distance is well defined if $\rho$ is only
a semi-metric.
For the case $(E, \rho) = (\R^{d}, |\cdot|)$, we write  
\[\mathcal{W}_p(\mu,\nu)=\inf_{\pi \in \Pi(\mu,\nu)}\left[\int_{\mathbb{R}^d \times \mathbb{R}^d} |x-y|^p\pi(\d x,\d y)\right]^{1/p}, \text{ for } p\ge 1.\]

\subsection{Assumptions}

We
use the following assumptions throughout the paper.

\begin{assumption}[Generalized Lipschitz condition on $b$]\label{b lipschitz}
There exists a continuous and
  bounded function $\kappa:[0,\infty) \to \mathbb{R}$ and a constant $L_1\geq 0$ such that  
\begin{equation*}
\langle x-y,b(x,\mu)-b(y,\nu)\rangle \leq   \kappa(|x-y|)|x-y|^2+L_1\mathcal{W}_1(\mu,\nu)|x-y|, \quad \forall x,y\in \R^{d},\  \mu,\nu \in \mathcal P_{1}(\R^{d}).
\end{equation*}
\end{assumption}

\begin{assumption}[Lipschitz condition on $\sigma$]\label{sigma lipschitz} 
There exist a 
constant $L_2 \geq 0$ 
 such that
    \begin{align}
  \label{e1:sigma-lip}      \|\sigma(x)-\sigma(y)\|^2&\leq L_2|x-y|^2.
    \end{align}
 \end{assumption}

\begin{assumption}[Growth condition on $b$]\label{b growth}
There exists a constant $L_3>0$ such that
\begin{equation*}
    |b(0,\mu)|\leq L_3(1+\mu(|\cdot|)).
\end{equation*}
\end{assumption}

\begin{assumption}[Boundedness of $\sigma$  and $\sigma^{-1}$]\label{bound 1} The diffusion matrix $\sigma$ is uniformly bounded and  nondegenerate in that
\begin{equation}\label{e:sigma-nondeg}
 \|\sigma(x)-\sigma(y)\|\leq M \quad \text{ and } \quad    \|\sigma(x)^{-1}\|\leq \Lambda,\qquad  \forall x,y\in \R^{d},
\end{equation} where  $\Lambda > 0$ is a constant.
\end{assumption}

\begin{assumption}\label{bound 2*}
 The constants $M$ and $\Lambda$ in Assumption
 {\rm\ref{bound 1}} satisfy 
\begin{equation}\label{e:lam-lam-condition}
 D:= 2 \Lambda^{-1}-M>0 .
\end{equation}
\end{assumption}

\begin{rem}
  We note that for any initial condition  $ \nu_{0} \in \mathcal P_{1}(\R^{d})$, \eqref{SDE 1} has a unique strong non-explosive solution under Assumptions \ref{b lipschitz}, \ref{sigma lipschitz}, and \ref{b growth}; see, for example, \cite{Wang-18}. 
 The boundedness and nondegeneracy condition \eqref{e:sigma-nondeg} is imposed so that we can derive the exponential contractions in Section \ref{sec-main}.
\end{rem}
\begin{rem}
    The function $\kappa$ in assumption \ref{b lipschitz} may take negative values.  Our first main result will require that $\kappa$ is negative for large inputs. 
\end{rem}
Finally, since $\kappa$ is continuous, we define
\begin{equation}\label{kappa_r}
    \kappa_{r}:  = \sup\{ |\kappa(s)|: s\in [0,r]\}, \quad \forall r>0.
\end{equation}

\section{Preliminary Results}\label{sect-prelim}

We
construct a coupling similar to that in  \cite{EberGZ:19} using the reflection coupling 
 of
 \cite{LindR-86}.  We start by recalling the synchronous and reflection couplings for   the  classical SDE
\begin{equation} \label{SDE 2}
    \d X_t=b(X_t)\d t+\sigma(X_t)\d B_t.
\end{equation}

Given initial values $(x_0,y_0)\in \mathbb{R}^d\times \R^{d}$ and a $d$-dimensional Brownian motion $B_t$,
the \textit{synchronous coupling} of two  solutions to \eqref{SDE 2}
is a diffusion process $(X_t,Y_t) \in \mathbb{R}^d\times \R^{d}$ 
 solving
\begin{equation*}
    \begin{aligned}
    \d X_t&=b(X_t)\d t+\sigma(X_t)\d B_t, \ \ \ &X_0=x_0,\\
    \d Y_t&=b(Y_t)\d t+\sigma(Y_t)\d B_t, &Y_0=y_0.
    \end{aligned}
\end{equation*}

 The \textit{reflection coupling} of two  solutions to \eqref{SDE 2}
 is a diffusion process $(X_t,Y_t)$ with values in $\mathbb{R}^{2d}$ solving
\begin{equation*}
    \begin{aligned}
    \d X_t&=b(X_t)\d t+\sigma(X_t)\d B_t, \ \ \ (X_0,Y_0)=(x_0,y_0)\\
    \d Y_t&=b(Y_t)\d t+\sigma(Y_t)H_t\d B_t, \ \ \ \text{for} \ t<T, \ \quad \text{ and }\ \quad X_t =Y_t, \ \ \ \text{for} \ t\geq T,
    \end{aligned}
\end{equation*}
where $T=\inf \{t\geq0 : X_t=Y_t\}$ is the coupling time, and   the matrix $H_{t}$ is defined as $H_{t}=I-2u_{t}u_{t}^\top$ with $u_{t}=(\sigma(Y_{t}))^{-1}(X_{t}-Y_{t})/|\sigma(Y_{t})^{-1}(X_{t}-Y_{t})|$ for $t < T$.

Heuristically, the reflection coupling constructs a "mirror hyperplane" between the two processes $X_t$ and $Y_t$.  The idea is that the Brownian motion should eventually help the process hit the mirror, at which point the two processes coincide.  Once the processes coincide, we can run them together by switching to the synchronous coupling.  Notice that this idea will not work in the case where the drift $b$ has measure dependence, as we would require both $X_t=Y_t$ and $\mu_t=\nu_t$, which will not be guaranteed in the above construction.  Nevertheless, we can still use this idea to construct a combined coupling.

We now construct a coupling for \eqref{SDE 1}, which
is adapted from that 
in  \cite{EberGZ:19}.  Roughly,
the idea is to   fix some $\delta \in (0,1)$ and use the reflection coupling   when $|X_t-Y_t|\geq \delta$, and the synchronous coupling when $|X_t-Y_t|\leq \delta/2$;  in between we   use a mixture of both couplings.

To proceed,
we define Lipschitz  functions $\rc : \mathbb{R}^d \times \mathbb{R}^d \to [0,1]$ and $\sc : \mathbb{R}^d \times \mathbb{R}^d \to [0,1]$ satisfying
\begin{equation} \label{transition}
\sc^2(x,y)+\rc^2(x,y)=1.
\end{equation}
Furthermore, we fix some  $\delta \in (0,1)$ and impose $\rc(x,y)=1$ for $|x-y|\geq \delta$ and $\rc(x,y)=0$ for $|x-y|\leq \delta/2$.  Note this implies that $\sc(x,y)=0$ for $|x-y|\geq \delta$ and $\sc(x,y)=1$ for $|x-y|\leq \delta/2$.  Now, we note that under Assumptions \ref{b lipschitz}, \ref{sigma lipschitz}, and \ref{b growth},
there exists
a unique, non-explosive solution $(X_t)$ to \eqref{SDE 1} for any initial probability measure $\mu_0$ by \cite[Theorem 2.1]{Wang-18}.  Therefore, we can define for a fixed initial probability measure $\mu_0$,
\[b_{\mu_0}(t,x)=b(x,\mu_t),\] where $\mu_{t} = \LL_{X_{t}}$ is the law of $X_{t}$. Moreover, Assumption \ref{b lipschitz}  implies that $b_{\mu_{0}}$ is  Lipschitz continuous with respect to $x$. Recall that $\sigma$ is Lipschitz continuous thanks to  \eqref{e1:sigma-lip}.
Thus,  by \cite[Chapter 5, Theorem 2.5]{Karatzas-S}, for fixed probability measures $\mu_0$ and $\nu_0$, parameter $\delta>0$, and independent Brownian motions $(B_t^1)$ and $(B_t^2)$, we can construct drift coefficients $b_{\mu_0}$ and $b_{\nu_0}$ and define the coupling $(U_t)=(X_t,Y_t)$ as the unique, strong, and non-explosive solution to
\begin{equation}\label{e:coupling}
    \begin{aligned}
    \d X_t&=b_{\mu_0}(t,X_t)\d t+\sigma(X_t)[\sc(U_t)\d B_t^1+\rc(U_t)\d B_t^2],\\
    \d Y_t&=b_{\nu_0}(t,Y_t)\d t+\sigma(Y_t)[\sc(U_t)\d B_t^1+\rc(U_t)H_t\d B_t^2],
    \end{aligned}
\end{equation}
with $(X_0,Y_0)=(x_0,y_0)$ and
$H_t=I-2u_tu_t^\top,$   where  \[ \ u_{t}=\begin{cases}
   \frac{(\sigma(Y_t))^{-1}(X_t-Y_t)}{|(\sigma(Y_t))^{-1}(X_t-Y_t)|}, & \text{ if  }  X_t\neq Y_t,    \\
   \text{an arbitrary unit vector},     & \text{ if } X_t=Y_t.
\end{cases}\]
  Note that the choice of $u_{t}$ when $X_t=Y_t$ is irrelevant since $\rc(U_t)=0$ in that case.  Now note that $H_{t}H_{t}^\top=I$ and so combined with \eqref{transition}, using Levy's characterization of Brownian motion, $(X_t)$ and $(Y_t)$ solve
\begin{align}
    \d X_t&=b_{\mu_0}(t,X_t)\d t+\sigma(X_t)\d B_t, \ \ \ \ X_0=x_0, \label{XProcess}\\
    \d Y_t&=b_{\nu_0}(t,Y_t)\d t+\sigma(Y_t)\d \hat{B}_t,  \ \ \ \ \ \ Y_0=y_0, \label{YProcess}
\end{align}
with respect to the Brownian motions
\begin{align*}
    B_t&=\int_0^t \rc(U_s)\d B_s^1+\int_0^t \sc(U_s)\d B_s^2, \ \ \ \ \ \text{and}\\
    \hat{B}_t&=\int_0^t \rc(U_s)H_s\d B_s^1 + \int_0^t \sc(U_s)\d B_s^2.
\end{align*}
Assumptions \ref{b lipschitz} and \ref{sigma lipschitz} guarantee the solutions to \eqref{XProcess} and \eqref{YProcess} are pathwise unique.  Thus they coincide almost surely with the solution of \eqref{SDE 1} with respect to the Brownian motions $(B_t)$ and $(\hat{B}_t)$ above and initial values $x_0$ and $y_0$, respectively.  Hence $(X_t,Y_t)$ is a coupling for \eqref{SDE 1}.

Consider the coupling $U_t=(X_t,Y_t)$ as written in \eqref{e:coupling}. 
Now   define the process $Z_t=X_t-Y_t$ and the radial process $r_t=|Z_t|$.  Then we   have
\begin{align}\label{e:Z_t sde}
\nonumber \d Z_t&=(b_{\mu_0}(t,X_t)-b_{\nu_0}(t,Y_t))\d t+ \sc(U_t)(\sigma(X_t)-\sigma(Y_t))\d B_t^1+\rc(U_t) (\sigma(X_t)-\sigma(Y_t)H_t) \d B_t^2\\
&=\beta_t\d t+ \sc(U_t) \Delta_t\d B_t^1+ \rc(U_t) \alpha_t \d B_t^2,
\end{align} where
throughout the paper, we denote $\Delta_t=\sigma(X_t)-\sigma(Y_t)$, $\alpha_t=\sigma(X_t)-\sigma(Y_t)H_t$, and $\beta_t=b_{\mu_0}(t,X_t)-b_{\nu_0}(t,Y_t)$ for notational simplicity.
In addition,  the dynamics for the  radial  process $r_t $ is described by the following lemma whose proof is   deferred to     Appendix \ref{sect-appendix}.

\begin{lemma}\label{radial lemma}
Suppose Assumptions \ref{b lipschitz} and \ref{sigma lipschitz}, then  we have
    \begin{align}\label{radial process}
        \nonumber \d r_t=&I_{\{r_{t}  > 0 \}}\bigg[\langle e_t,\beta_t \rangle \d t + \frac{\rc^{2}(U_t)}{2r_t} (\tr(\alpha_t \alpha_t^\top)-|\alpha_t^\top e_t|^2 )\d t +\frac{\sc^{2}(U_t)}{2r_t} (\tr(\Delta_t \Delta_t^\top)-|\Delta_t^\top e_t|^2 )\d t\\ &+\sc(U_t)\langle e_t,\Delta_t \d B_t^1 \rangle + \rc(U_t) \langle e_t, \alpha_t\d B_t^2 \rangle\bigg]
    \end{align}
almost surely, where \begin{displaymath}
e_{t} = \begin{cases}
 \frac{ Z_t}{r_t},    & \text{ if  } r_{t} > 0, \\[1ex]
  \frac{b_{\mu_0}(t,X_t)-b_{\nu_0}(t,Y_t)}{|b_{\mu_0}(t,X_t)-b_{\nu_0}(t,Y_t)| },    & \text{ if }r_{t } =0 \text{ and }b_{\mu_0}(t,X_t)-b_{\nu_0}(t,Y_t) \neq 0,\\[1ex]
  e, & \text{ otherwise},
\end{cases}
\end{displaymath} in which  $e$ 
is an arbitrary unit vector.
\end{lemma}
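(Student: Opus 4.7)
The plan is to apply It\^o's formula to a smooth mollification of the Euclidean norm and then send the mollification parameter to zero. For each $\e > 0$, set $f_{\e}(z) = (|z|^{2} + \e^{2})^{1/2}$, whose gradient is $\nabla f_{\e}(z) = z/(|z|^{2}+\e^{2})^{1/2}$ and whose Hessian is $\nabla^{2} f_{\e}(z) = I/(|z|^{2}+\e^{2})^{1/2} - zz^{\top}/(|z|^{2}+\e^{2})^{3/2}$. Starting from the dynamics \eqref{e:Z_t sde} of $Z_{t}$, and using the independence of $B^{1}$ and $B^{2}$, the quadratic covariation is $\d [Z]_{t} = [\sc^{2}(U_{t})\Delta_{t}\Delta_{t}^{\top} + \rc^{2}(U_{t})\alpha_{t}\alpha_{t}^{\top}]\d t$. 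Substituting into It\^o's formula for $f_{\e}(Z_{t})$ produces a drift consisting of a projection term $\langle Z_{t}, \beta_{t}\rangle / (|Z_{t}|^{2}+\e^{2})^{1/2}$ together with Hessian contributions that, for each $M \in \{\Delta_{t},\alpha_{t}\}$, take the form $[\tr(MM^{\top})(|Z_{t}|^{2}+\e^{2}) - |M^{\top}Z_{t}|^{2}]/[2(|Z_{t}|^{2}+\e^{2})^{3/2}]$, weighted by $\sc^{2}(U_{t})$ or $\rc^{2}(U_{t})$, plus two stochastic integrals driven by $B^{1}$ and $B^{2}$.

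Next I would send $\e \downarrow 0$. On the set $\{r_{t} > 0\}$ the pointwise limits reproduce exactly the integrands appearing in \eqref{radial process} with $e_{t} = Z_{t}/r_{t}$: for instance, using $Z_{t} = r_{t}e_{t}$, the $\sc^{2}$ Hessian piece collapses to $(\sc^{2}(U_{t})/(2r_{t}))[\tr(\Delta_{t}\Delta_{t}^{\top}) - |\Delta_{t}^{\top}e_{t}|^{2}]$. To justify passing the limit under the time integrals I would invoke dominated convergence, relying on the following uniform bounds: by Assumption \ref{sigma lipschitz}, $\|\Delta_{t}\| \le \sqrt{L_{2}}\,r_{t}$, so quotients such as $\tr(\Delta_{t}\Delta_{t}^{\top})/(r_{t}^{2}+\e^{2})^{1/2}$ are dominated uniformly in $\e$; the cutoff $\rc(U_{t})$ vanishes on $\{|X_{t}-Y_{t}| \le \delta/2\}$, which neutralizes any singular contribution from the $\alpha_{t}$ terms near the diagonal; and $|\beta_{t}|$ is pathwise integrable by Assumptions \ref{b lipschitz}--\ref{b growth}. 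The two stochastic integrals are handled analogously, converging in $L^{2}$ via It\^o isometry with the same dominating controls.

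The main obstacle is the behavior on the random set $\{r_{t} = 0\}$, where $|\cdot|$ fails to be differentiable and $e_{t}$ is defined only by convention. This is precisely what the indicator $I_{\{r_{t}>0\}}$ in \eqref{radial process} absorbs. The essential observation is that every integrand on the right-hand side of \eqref{radial process} vanishes intrinsically on $\{r_{t} = 0\}$: at $X_{t} = Y_{t}$ one has $\Delta_{t} = 0$, while the $\alpha_{t}$ pieces are suppressed by $\rc^{2}(U_{t}) = 0$ near the diagonal. Hence the arbitrary choice of $e_{t}$ on $\{r_{t}=0\}$ does not affect the right-hand side, and the indicator simply records that the pointwise $\e \downarrow 0$ limits of the mollified integrands are all zero wherever $Z_{t} = 0$, giving the stated identity almost surely.
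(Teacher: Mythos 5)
Your proposal is correct and follows essentially the same strategy as the paper: approximate the non-smooth map $z\mapsto |z|$ by a $C^{2}$ function, apply It\^o's formula, and pass to the limit using dominated convergence for the Lebesgue integrals and $L^{2}$/It\^o-isometry control (along a subsequence) for the stochastic integrals, with the same two controls doing all the work near the diagonal — the Lipschitz bound $\|\Delta_{t}\|\le \sqrt{L_{2}}\,r_{t}$ suppresses the $\Delta_{t}$-contributions, and $\rc(U_{t})=0$ on $\{r_{t}<\delta/2\}$ suppresses the $\alpha_{t}$-contributions. The only real difference is cosmetic: the paper first takes $|Z_{t}|^{2}$ and then applies a piecewise-polynomial $C^{2}$ extension $g_{\e}$ of $\sqrt{\cdot}$ that agrees exactly with $\sqrt{\cdot}$ on $[\e,\infty)$ (so the error terms live only on $\{|Z_{s}|^{2}<\e\}$), whereas you use the global mollifier $(|z|^{2}+\e^{2})^{1/2}$, which differs from $|z|$ everywhere but converges just as cleanly. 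Both choices lead to the same dominating bounds and the same identification of the limiting integrands.
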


The following lemma will be used to bound the drift
when $r_t$ is small. The proof of the lemma,  which consists of elementary calculations,    is arranged in   Appendix \ref{sect-appendix}.

\begin{lemma}\label{A bound}
For $x\neq y\in \R^d$, denoting   $z=x-y$, $e=z/|z|$,  
 $u(x,y)=(\sigma(y)^{-1}z)/|\sigma(y)^{-1}z|$,  $H(x,y)=I-2u(x,y)u(x,y)^\top$,   $\Delta(x,y)=\sigma(x)-\sigma(y)$, and $\alpha(x,y)=\sigma(x)-\sigma(y)H(x,y)$. Then  
\begin{equation}\label{e1-lem3.1}
    \tr(  \alpha (x,y)\alpha(x,y)^\top)-| \alpha(x,y)^\top e|^2=\tr(\Delta(x,y) \Delta(x,y)^\top)-|\Delta(x,y)^\top e|^2.
\end{equation}
In addition, under  Assumption \ref{sigma lipschitz}, we have 
\begin{equation}\label{e2-lem3.1}
   \big| \tr(\Delta(x,y) \Delta(x,y)^\top)-|\Delta(x,y)^\top e|^2\big| \le 2d L_{2} |x-y|^2.
\end{equation} \end{lemma}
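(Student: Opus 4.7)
The plan is to establish both parts by direct algebraic manipulation, exploiting the fact that $H$ is a symmetric orthogonal matrix and that $u$ is chosen specifically so that $\sigma(y)u$ is parallel to $e$.

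For the identity \eqref{e1-lem3.1}, I would first expand $\alpha\alpha^\top$. Since $H = I - 2uu^\top$ satisfies $H^\top = H$ and $HH^\top = I$, writing $\alpha = \sigma(x) - \sigma(y)H$ gives
\[
\alpha\alpha^\top = \sigma(x)\sigma(x)^\top - \sigma(x)H\sigma(y)^\top - \sigma(y)H\sigma(x)^\top + \sigma(y)\sigma(y)^\top,
\]
so subtracting $\Delta\Delta^\top = \sigma(x)\sigma(x)^\top - \sigma(x)\sigma(y)^\top - \sigma(y)\sigma(x)^\top + \sigma(y)\sigma(y)^\top$ and using $I - H = 2uu^\top$ yields
\[
\alpha\alpha^\top - \Delta\Delta^\top = 2\sigma(x)uu^\top\sigma(y)^\top + 2\sigma(y)uu^\top\sigma(x)^\top.
\]
Taking traces gives $\tr(\alpha\alpha^\top)-\tr(\Delta\Delta^\top) = 4\,u^\top\sigma(y)^\top\sigma(x)u$, while the quadratic form satisfies
\[
|\alpha^\top e|^2 - |\Delta^\top e|^2 = 4(e^\top\sigma(x)u)(u^\top\sigma(y)^\top e).
\]
To reconcile these I would invoke the crucial property of $u$: setting $c = |\sigma(y)^{-1}z|$ we have $\sigma(y)u = z/c = (|z|/c)\,e$. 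Thus $u^\top\sigma(y)^\top e = |z|/c$ and $u^\top\sigma(y)^\top\sigma(x)u = (|z|/c)\,e^\top\sigma(x)u$, so both sides equal $(4|z|/c)\,e^\top\sigma(x)u$, which establishes \eqref{e1-lem3.1}.

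For the bound \eqref{e2-lem3.1}, I would observe that $|\Delta^\top e|^2 \ge 0$ and $|\Delta^\top e|^2 \le \|\Delta\|^2$, so $\tr(\Delta\Delta^\top) - |\Delta^\top e|^2$ is nonnegative. Since for any $d \times d$ matrix $A$ we have the Frobenius-operator norm inequality $\tr(AA^\top) = \|A\|_F^2 \le d\|A\|^2$, Assumption \ref{sigma lipschitz} yields
\[
0 \le \tr(\Delta\Delta^\top) - |\Delta^\top e|^2 \le \tr(\Delta\Delta^\top) \le d\|\Delta\|^2 \le dL_2|x-y|^2,
\]
which is even sharper than the stated bound of $2dL_2|x-y|^2$.

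The main obstacle is the algebraic step in the first part, where one must recognize that the identity \eqref{e1-lem3.1} is essentially built into the definition of $u$: the reflection direction is specifically designed so that $\sigma(y)u$ aligns with $z$, which is precisely what makes $u^\top\sigma(y)^\top\sigma(x)u$ factor as $(u^\top\sigma(y)^\top e)(e^\top\sigma(x)u)$. Once this is noticed, the rest is just bookkeeping.
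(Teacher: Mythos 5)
Your proof is correct and follows essentially the same route as the paper: both expand using the identity $\alpha = \Delta + 2\sigma(y)uu^\top$ and exploit the fact that $\sigma(y)u$ is parallel to $z$, which is exactly the cancellation mechanism the paper's computation rests on. Your organization of part (i) is slightly cleaner (you compute $\alpha\alpha^\top - \Delta\Delta^\top$ once and match the trace and quadratic-form contributions), and in part (ii) your observation that $\tr(\Delta\Delta^\top) - |\Delta^\top e|^2 \ge 0$ yields the sharper bound $dL_2|x-y|^2$, whereas the paper applies the triangle inequality to the absolute value and picks up the extra factor of $2$; this is a minor but genuine improvement.
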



Using Lemma \ref{A bound} with Assumptions \ref{sigma lipschitz} and \ref{bound 1}, we can define a bound $A$ by
\begin{equation}\label{A definition}
A: =\sup_{x\neq y}\bigg\{\frac{d\|\sigma(x) - \sigma(y)\|^{2} |x-y|^{2}- |(\sigma(x) - \sigma(y))^{\top}(x-y)|^{2}}{|x-y|^{3}} \bigg\}
\end{equation}
where $d$ is the dimension.
Apparently, $A \ge 0$.   We also  claim that $A<\infty$. Indeed,  
since $\|\sigma(x)-\sigma(y)\| \leq M$ thanks to Assumption \ref{bound 1},  
 the expression $\frac{d|\sigma(x) - \sigma(y)|^{2} |x-y|^{2}- |(\sigma(x) - \sigma(y))^{\top}(x-y)|^{2}}{|x-y|^{3}}$ in \eqref{A definition} is bounded when $|x-y|$ is large. On the other hand,   Lemma \ref{A bound} ensures  that it is also  bounded when $|x-y|$ approaches $0$. Therefore the claim that  $A<\infty$ follows. 


\section{Main Results}\label{sec-main}

This section  gives two different contraction rates for two solutions starting with different initial values.  The first result assumes that as the two processes move far away from each other, the net drift between them
is strong enough to push them back together.  In this sense we ensure that the function $\kappa$ is negative enough as $r$ gets large.  The second result
assumes that the drift pushes radially inward when the process is far
away from the origin.
Here we ensure that the coefficient $b$ is negative enough when $X_t$ is large.

\subsection{Contraction Result Under  Contractivity at Infinity}\label{result 1}

Our first result needs
the following  assumption.


\begin{assumption}\label{limsup kappa}
The function $\kappa(r)$ in Assumption {\rm \ref{b lipschitz}} satisfies
\begin{equation} \label{limsup condition}
    \limsup_{r\to \infty}\kappa(r)< 0. 
\end{equation} 
\end{assumption}

  In order to construct the  metric $\rho$  to be used 
   in Theorem \ref{Contraction result},  
   we   define the constants $R_{1}$ and   $R_2$ to be
\begin{align*}
   R_{1} & : =    1 \vee \inf\{R\ge 0: \kappa(r) r < -A , \ \forall r\ge R\}, \\
    R_2&: =\inf\left\{R>R_1 : \kappa(r) \leq -\left(\frac{D^2}{R(R-R_1)}+\frac{A}{R}\right),\  \ \forall r\geq R\right\},
\end{align*} where $A$ is the nonnegative constant defined in \eqref{A definition} and $D$ is defined in \eqref{e:lam-lam-condition}. Note that
$R_{1}, R_2<\infty$ by \eqref{limsup condition}.
  Next we define \begin{displaymath}
\varphi(r)=\exp\left(-\frac{2}{D^2}\int_0^r(u\kappa^*(u)+A)\d u\right), \ \text{ and } \ \Phi(r)=\int_0^r\varphi(s)\d s, \quad \forall r\ge 0,
\end{displaymath} where $\kappa^{*}$ is defined by $\kappa^*(r)=\kappa(r)$ when $r\kappa(r)\geq -A$ and $\kappa^*(r)=-A/r$ when $r\kappa(r)<-A$.  Clearly we have $\kappa^*(r)\geq \kappa(r)$ and $r \kappa^*(r) +A \geq 0$ for all $r\ge 0$.  Hence $\varphi(r)\leq 1 $ for $r\geq 0$.  Moreover,  $\varphi(r)  = \varphi(R_{1})$  and hence $\Phi(r) = \Phi(R_{1}) + (r-R_{1}) \varphi(R_{1})$   for $r \ge R_{1}$. Now, we define the constant $c > 0$ to be 
\begin{equation}\label{e:c-def}
    c:=\dfrac{D^{2}}  {4 \int_0^{R_2}\Phi(s)\varphi(s)^{-1}\d s}
\end{equation}
Next, we consider the function \begin{displaymath}
g(r)=1-\frac{2c}{D^2}\int_0^r\Phi(s)\varphi(s)^{-1}\d s, \ \ \forall r\ge 0.
\end{displaymath} Note that $\frac{1}{2}\leq g(r\wedge R_2)\leq 1$. Finally, the function $f:[0,\infty) \to [0,\infty)$ is given by\begin{equation}
\label{e:f-defn}
 f(r)=\int_0^r\varphi(s)g(s\wedge R_2)\d s, \ \ \forall r\ge 0.
\end{equation}
Note that $f(0)=0$, $f(r)>0$ for $r>0$,  and $f \in C^{1}(0,\infty) \cap C^{2}((0,\infty) \setminus\{R_{2}\})$ with \begin{align}
  \label{f 1st derivative}  & f'(r)=\varphi(r)g(r\wedge R_{2}) \in (0, 1],\\
 \label{f second derivative}& \begin{aligned}  f''(r)& =  -\frac{2f'(r)}{D^2}(r\kappa^*(r)+A)-\frac{2c}{D^2}\Phi(r)I_{\{r < R_{2} \}} \\ & \leq -\frac{2f'(r)}{D^2}(r\kappa^*(r)+A)-\frac{2c}{D^2}f(r) I_{\{r < R_{2} \}} \le 0.\end{aligned}
\end{align}
Consequently, $f$ is a strictly increasing and concave function.
Note also that  $f$ is linear on $[R_2,\infty)$.

Finally, it is clear
that $f(|x-y|) = f(|y-x|) $ and $f(|x-y|)  \le f(|x-z|) + f(|z-y|)  $ for all $x,y,z\in \R^{d}$.   Thus,  \begin{equation}
\label{e:rho-defn}
\rho(x,y): =f(|x-y|)
\end{equation} is a well defined metric on $\R^{d}$.

The following lemma gives linear bounds on the function $f$.

\begin{lemma}\label{linear bounds}
For the functions $f$, $\varphi$, and $\Phi$ defined above, we have
\[r\varphi(R_1)\leq \Phi(r) \leq \frac{D^2f(r)}{2} \leq \frac{D^2\Phi(r)}{2}\leq \frac{D^2r}{2}.\]
\end{lemma}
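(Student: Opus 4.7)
The plan is to establish each of the four inequalities in the chain as a direct consequence of the monotonicity and range bounds of $\varphi$ and $g$ already recorded right before the statement. The key observations to extract first are: (i) $\varphi(0)=1$ and $\varphi$ is non-increasing on $[0,\infty)$, since $\varphi'(r) = -\tfrac{2}{D^2}(r\kappa^*(r)+A)\varphi(r)$ and $r\kappa^*(r)+A\ge 0$ by the construction of $\kappa^*$; (ii) $\varphi$ is in fact constant on $[R_1,\infty)$ with value $\varphi(R_1)$, because for $r\ge R_1$ we have $\kappa^*(r) = -A/r$ so the integrand defining $\varphi$ vanishes; (iii) the bounds $\tfrac{2}{D^2}\le g(s\wedge R_2)\le 1$ already noted.

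For the leftmost inequality $r\varphi(R_1)\le \Phi(r)$, I would split at $R_1$. For $r\le R_1$, monotonicity of $\varphi$ gives $\varphi(s)\ge \varphi(R_1)$ for $s\in[0,r]$, so integrating yields the bound. For $r\ge R_1$, constancy of $\varphi$ past $R_1$ gives $\Phi(r)=\Phi(R_1)+(r-R_1)\varphi(R_1)$, and applying the previous case at $r=R_1$ gives $\Phi(R_1)\ge R_1\varphi(R_1)$, which combined yields $\Phi(r)\ge r\varphi(R_1)$.

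For the middle two inequalities, I would just invoke the sandwich bound on $g(s\wedge R_2)$ inside the definition $f(r)=\int_0^r \varphi(s)g(s\wedge R_2)\,\d s$. The lower bound $g\ge 2/D^2$ gives $f(r)\ge \tfrac{2}{D^2}\Phi(r)$, which rearranges to $\Phi(r)\le \tfrac{D^2}{2}f(r)$. The upper bound $g\le 1$ gives $f(r)\le \Phi(r)$, which after multiplying by $D^2/2$ yields $\tfrac{D^2}{2}f(r)\le \tfrac{D^2}{2}\Phi(r)$. Finally, the rightmost inequality $\tfrac{D^2}{2}\Phi(r)\le \tfrac{D^2}{2}r$ is immediate from $\varphi(s)\le \varphi(0)=1$ together with the monotonicity observation in (i).

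There is no real obstacle here: the entire lemma reduces to combining the monotonicity of $\varphi$, its eventual constancy on $[R_1,\infty)$, and the two-sided bound on $g$. If anything warrants care, it is only the lower estimate $\Phi(r)\ge r\varphi(R_1)$ past $R_1$, which is why I would handle the two regions $r\le R_1$ and $r\ge R_1$ separately rather than trying a single monotone comparison.
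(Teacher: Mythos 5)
Your proof is correct and takes essentially the same approach as the paper: the chain reduces to the observations that $\varphi(s)\geq \varphi(R_1)$ for all $s$ (since $\varphi$ is non-increasing and constant past $R_1$), $\varphi \leq 1$, and $2/D^2 \leq g(\cdot\wedge R_2) \leq 1$. Your two-case split at $R_1$ for the leftmost inequality is a slightly longer route than the paper's one-line computation $r\varphi(R_1)-\Phi(r)=\int_0^r(\varphi(R_1)-\varphi(s))\,\d s\leq 0$, but the underlying idea is identical.
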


\begin{proof}
The first inequality follows since $\varphi(r)$ is decreasing for $r<R_1$ and $\varphi(r)=\varphi(R_1)$ for $r\geq R_1$, so
\[r\varphi(R_1)-\Phi(r)=\int_0^r(\varphi(R_1)-\phi(s))\d s\leq 0.\]
Next, the observation  $2/D^2\leq g(r\wedge R_2)\leq 1$  gives the second and the third inequalities.  The last inequality follows since $\varphi(r)\leq1$.
\end{proof}

\begin{theorem}\label{Contraction result}
Suppose Assumptions {\rm\ref{b lipschitz}}, {\rm\ref{sigma lipschitz}}, {\rm\ref{b growth}}, {\rm\ref{bound 1}},  {\rm\ref{bound 2*}},  and {\rm\ref{limsup kappa}} hold.   
Let $\mu_t$ and $\nu_t$ denote the marginal laws of a strong solution $(X_t)$ of equation \eqref{SDE 1} with initial conditions $X_0 \sim \mu_0\in \mathcal P_{1}(\R^{d})$ and $X_0 \sim \nu_0\in \mathcal P_{1}(\R^{d}) $, respectively. Then there exist 
 constants $C$ and $\gamma$ such that
\begin{align}
    \mathcal{W}_{\rho}(\mu_t,\nu_t) &\leq e^{-\gamma t}\mathcal{W}_{\rho}(\mu_0,\nu_0) \label{Wrho}\\
    \mathcal{W}_1(\mu_t,\nu_t) &\leq Ce^{-\gamma t}\mathcal{W}_1(\mu_0,\nu_0) , \label{W1}
\end{align}
where the metric $\rho$ is given in \eqref{e:rho-defn} and 
the constants are given by
\begin{align}\label{e:gamma-defn}
    \gamma = c-\frac{L_1D^2}{2\varphi(R_1)}\ \ \ \text{ and }  \ \ \
    C =\frac{D^2}{2\varphi(R_1)},\end{align}
in which  $c$, $R_1$, $D$ and $\varphi$ are defined as above.

\end{theorem}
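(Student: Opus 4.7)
The plan is to apply It\^o's formula to $f(r_t)$, where $r_t=|X_t-Y_t|$ is the radial process of the coupling \eqref{e:coupling} started from an optimal coupling of the initial laws, and $f$ is the strictly increasing concave function \eqref{e:f-defn}. The point of designing $f$ via the differential inequality \eqref{f second derivative} is that the resulting drift of $\d f(r_t)$ will collapse into a clean $-cf(r_t)\,\d t$ dissipation, with the McKean--Vlasov contribution from the $L_1\W_1(\mu_t,\nu_t)$ term in Assumption \ref{b lipschitz} appearing as an additive perturbation that will be absorbed via Lemma \ref{linear bounds}.

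For the drift part in \eqref{radial process}, Assumption \ref{b lipschitz} gives $\langle e_t,\beta_t\rangle\le\kappa(r_t)r_t+L_1\W_1(\mu_t,\nu_t)$, and Lemma \ref{A bound} combined with the identity $\sc^2+\rc^2=1$ and the definition \eqref{A definition} of $A$ bounds the remaining diffusion drift $\tfrac{1}{2r_t}[\tr(\Delta_t\Delta_t^\top)-|\Delta_t^\top e_t|^2]$ by $A/2$. For the quadratic variation, the key lower bound
\[
\sc^2(U_t)|\Delta_t^\top e_t|^2+\rc^2(U_t)|\alpha_t^\top e_t|^2\ge D^2
\]
(at least in the reflection region $r_t\ge\delta$) comes from the decomposition $\alpha_t^\top e_t=\Delta_t^\top e_t+2\langle u_t,\sigma(Y_t)^\top e_t\rangle u_t$ with $\langle u_t,\sigma(Y_t)^\top e_t\rangle\ge\Lambda^{-1}$ from the nondegeneracy in Assumption \ref{bound 1}, and Assumption \ref{bound 2} ensuring $D>\sqrt 2$. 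Since $f'\ge 0$ and $f''\le 0$, substituting the estimate for $f''$ from \eqref{f second derivative} into It\^o's formula yields (for $r_t<R_2$)
\[
\d f(r_t)\le\bigl[-cf(r_t)+L_1 f'(r_t)\W_1(\mu_t,\nu_t)\bigr]\d t+\d M_t,
\]
where $M_t$ is a local martingale. For $r_t\ge R_2$, where $f''(r_t)=0$ and $r_t\kappa^*(r_t)+A=0$, one instead uses the strict dissipation $r_t\kappa(r_t)\le-A-\tfrac{D^2}{R_2-R_1}$ coming from the definition of $R_2$, together with the linearity of $f$ beyond $R_2$ and the choice of $c$ in \eqref{eq:c-lower-bd}, to recover the same inequality.

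Taking expectations, the martingale vanishes, and using $f'\le 1$ together with Lemma \ref{linear bounds} in the form $\W_1(\mu_t,\nu_t)\le\tfrac{D^2}{2\varphi(R_1)}\W_\rho(\mu_t,\nu_t)\le\tfrac{D^2}{2\varphi(R_1)}\E[f(r_t)]$ (the second bound holds because $(X_t,Y_t)$ is some coupling of $(\mu_t,\nu_t)$), I obtain
\[
\tfrac{\d}{\d t}\E[f(r_t)]\le\Bigl(-c+\tfrac{L_1D^2}{2\varphi(R_1)}\Bigr)\E[f(r_t)]=-\gamma\,\E[f(r_t)].
\]
Gr\"onwall's inequality then gives $\E[f(r_t)]\le e^{-\gamma t}\E[f(r_0)]$, and choosing the coupling at time $0$ optimally for $\W_\rho$ yields \eqref{Wrho}. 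The bound \eqref{W1} follows from the two-sided comparison $\tfrac{2\varphi(R_1)}{D^2}|x-y|\le f(|x-y|)\le|x-y|$ in Lemma \ref{linear bounds} together with an optimal coupling for $\W_1(\mu_0,\nu_0)$ at time zero.

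The hard part will be establishing the pointwise quadratic-variation lower bound $|\alpha_t^\top e_t|^2\ge D^2$, since this requires delicate matrix identities together with the precise boundedness/nondegeneracy constants in Assumption \ref{bound 1}, and is exactly where Assumption \ref{bound 2} becomes essential. Secondary difficulties include (i) the case analysis across the synchronous ($r_t\le\delta/2$), mixed ($\delta/2<r_t<\delta$), and reflection ($r_t\ge\delta$) regimes, where the QV lower bound degrades near $r_t=0$ but can be absorbed by sending $\delta$ small and using that $f(r_t)$ itself is small there; (ii) verifying the differential inequality in the linear regime $r_t\ge R_2$, where the contraction must come from the drift via the definition of $R_2$ rather than from $f''$; and (iii) justifying It\^o's formula despite $f$ being only $C^1$ at $r=R_2$, which can be done by standard $C^2$ approximation of $f$.
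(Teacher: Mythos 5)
Your proposal follows essentially the same route as the paper: mixed reflection/synchronous coupling, applying a generalized It\^o formula to $f(r_t)$, using the design of $f$ (via \eqref{f second derivative}) and the bound $|\alpha_t^\top e_t|^2\ge D^2$ to collapse the drift into $-cf(r_t)$, absorbing the McKean--Vlasov term via Lemma \ref{linear bounds}, and closing with Gr\"onwall. Two remarks on the details.

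First, the paper handles the loss of $C^2$-regularity at $r=R_2$ via the It\^o--Tanaka formula and a local-time argument (showing $\text{Leb}\{s: r_s\in\{R_1,R_2\}\}=0$ a.s.\ because $\d[r]_s\ge D^2\,\d s$ there), rather than a $C^2$ approximation of $f$; both routes work, but It\^o--Tanaka is what lets the paper write $\int L_t^x\mu_f(\d x)\le\int f''(r_s)\,\d[r]_s$ directly.

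Second, your displayed Gr\"onwall inequality $\tfrac{\d}{\d t}\E[f(r_t)]\le-\gamma\E[f(r_t)]$ is slightly too clean: for a \emph{fixed} $\delta$ the coupling only gives
\[
\tfrac{\d}{\d t}\E[e^{\gamma t}f(r_t)]\le e^{\gamma t}(c+\kappa_\delta+dL_2)\delta,
\]
because in the regime $r_t<\delta$ the quadratic-variation lower bound fails and the drift is only $O(\delta)$, which is what the paper bounds in \eqref{e1:e-betas-estimate}--\eqref{K_0 ref}. You cannot send $\delta\to0$ inside the differential inequality since the coupling process itself depends on $\delta$; one must first integrate, obtain $\E[f(r_t)]\le e^{-\gamma t}\E[f(r_0)]+(c+\kappa_\delta+dL_2)\delta/\gamma$, and \emph{then} let $\delta\to0$. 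You flag this as a ``secondary difficulty'' and your phrase ``absorbed by sending $\delta$ small'' shows you see the issue, but the omission of the error term from the displayed inequality should be repaired. With that fix the proposal is correct and matches the paper's argument.
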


\begin{rem}
    In application, one would hope for $\gamma>0$.  This requires $L_1,$ to be small when $R_1$ and $R_2$ are large. Such requirements are, in fact, natural. We notice that $\kappa(r)<0$ is needed to ensure recurrence of the radial process inside some ball of radius $R_2$.  However, if $L_1$ is large, one could have multiple distinct stationary solutions to \eqref{SDE 1} inside that ball.
\end{rem}



\begin{proof}
We consider the function $f$ defined in \eqref{e:f-defn} and note that using the coupling $U_t=(X_t,Y_t)$ from Section \ref{sect-prelim}, in which we assume without loss of generality that $\mathcal{W}_{\rho}(\mu_0,\nu_0) = \E[\rho(X_{0}, Y_{0})]= \E[f(r_{0})]$, we have
\[\mathcal{W}_{\rho}(\mu_t,\nu_t)\leq \E[\rho(X_t,Y_t)]=\E[f(|X_t-Y_t|)] = \E[f(r_{t})].\]
Therefore, we need to estimate $\E[f(r_{t})]$, in which $r_{t}$ satisfies \eqref{radial process}.
Applying the It\^o-Tanaka  formula (see, for example, \cite[Theorem 22.5]{Revuz-Yor-99}) to $f(r_{t})$, 
we get almost surely
\begin{align}\label{Ito-Tanaka}
 \nonumber       f(r_t)=&\ f(r_0)+\int_0^t f'(r_s)\d r_s+\frac{1}{2}\int_{-\infty}^\infty L_s^x \mu_f(\d x) \\
   \nonumber     =&\ f(r_0)+\int_0^t   I_{\{r_{s} > 0 \}} f'(r_s)\langle e_s, \beta_s \rangle \d s +\int_0^t  I_{\{r_{s} > 0 \}} f'(r_s)\frac{\rc^2(U_s)}{2r_s}\left(\tr(\alpha_s\alpha_s^\top)-|\alpha_s^\top e_s|^2\right)\d s\\
        &+\int_0^t  I_{\{r_{s} > 0 \}} f'(r_s)\frac{\sc^2(U_s)}{2r_s}\left(\tr(\Delta_s \Delta_s^\top )-|\Delta_s^\top e_s|^2\right)\d s \\
 \nonumber       &+\int_0^t  I_{\{r_{s} > 0 \}} f'(r_s) [\rc(U_s)\langle e_s,\alpha_s \d B_s^2 \rangle +\sc(U_s)\langle e_s,\Delta_s\d B_s^1 \rangle]+\frac12 \int_{-\infty}^\infty L_t^x\mu_f(\d x), 
    \end{align}
where $L_t^x$ is the right-continuous local time of $(r_t)$, 
and $\mu_f$ is the nonpositive measure representing the second derivative of $f$.  Furthermore, from the It\^o-Tanaka formula we have for every measurable function $g:\mathbb{R} \to \mathbb{R}_+$
\[\int_0^tg(r_s)\d[r]_s=\int_{-\infty}^\infty g(x)L_t^x\d x.\]
Thanks to  \eqref{e:sigma-nondeg} and \eqref{e:lam-lam-condition}, 
we can use the same argument as that in \cite[section 3]{LindR-86} to show that \begin{equation}
\label{e:alpha e>D>0}
|\alpha_{s}^\top e_{s}|^2\geq D^2>0.
\end{equation}
  Since $\delta< 1 \le  R_1$, it follows that  $r_{s} \ge \delta  $ if $ I_{\{r_s\in \{R_1,R_2\}\}} =1$. Moreover, on the set $\{r_{s} \ge \delta \}$, $Z_s\neq0$, and we have $\sc^2(U_s)=0$ and $\rc^2(U_s)=1$ therefore, 
  \[\d [r]_{s} =I_{\{Z_s \neq 0 \}}[ \rc^2(U_s)|\alpha_s^\top e_s|^2+\sc^2(U_s)|\Delta_s^\top e_s|^2]\d s  = |\alpha_s^\top e_s|^2 \d s  \ge D^{2} \d s .\]
Consequently, we have
\begin{align*}
\text{Leb}\{0\leq s \leq t : r_s\in \{R_1,R_2\}\} &= \int_0^t I_{\{r_s\in \{R_1,R_2\}\}}\d s
 \le \frac{1}{D^{2}}  \int_0^t   I_{\{r_s\in \{R_1,R_2\}\}} \d[r]_{s}\\
&=\frac{1}{D^{2}} \int_{\{R_1,R_2\}}L_t^x\d x=0 \ \ \ \text{a.s.}\end{align*}
where $ \text{Leb}\{0\leq s \leq t : r_s\in \{R_1,R_2\}\}$ is the Lebesgue measure.  Furthermore, recall that  $f$ is concave  and that  $f\in C^{1}(0, \infty) \cap C^{2}((0,\infty) \setminus \{R_1,R_2\})$. Hence,
\begin{equation}\label{Local Time}\begin{aligned}
\int_{-\infty}^\infty L_t^x\mu_f(\d x) &\leq \int_0^t f''(r_s)\d[r]_s\\&=\int_0^t f''(r_s)\left(\rc^2(U_s)|\alpha_s^\top e_s|^2+\sc^2(U_s)|\Delta_s^\top e_s|^2\right)\d s \ \text{ a.s.}\end{aligned}\end{equation}
We now plug \eqref{Local Time} into   \eqref{Ito-Tanaka} to obtain
\begin{align} \label{df(r)}
 \nonumber  &  \d f(r_t) \\\nonumber  &\leq   I_{\{r_{t} > 0 \}}\bigg[f'(r_t)\bigg(\!\langle e_t,\beta_t \rangle + \frac{\rc^2(U_t)}{2r_t}(\tr(\alpha_t \alpha_t^\top )-|\alpha_t^\top e_t|^2)+\frac{\sc^2(U_t)}{2r_t}(\tr(\Delta_t \Delta_t^\top )-|\Delta_t^\top  e_t|^2)\bigg)\bigg]\d t\\
    &\quad +I_{\{r_{t} > 0 \}} \frac{f''(r_t)}{2}\left(\rc^{2}(U_t)|\alpha_t^\top  e_t|^2+ \sc^{2}(U_t)|\Delta_t^\top  e_t|^2\right)\d t \\
 \nonumber   &\quad + I_{\{r_{t} > 0 \}} f'(r_t)\left(\rc^{2}(U_t)\langle e_t, \alpha_t \d B^2 _t \rangle + \sc^{2}(U_t) \langle e_t, \Delta_t \d B^1_t\rangle\right).
    \end{align}

Using integration by parts and \eqref{df(r)}, we have   for the  constant $\gamma= c-\frac{L_1D^2}{2\varphi(R_1)}$,
\begin{equation}\label{expectation of exponential}
    \E[e^{\gamma t}f(r_t)-f(r_0)]\leq \int_0^t e^{\gamma s}\E[\mathcal{E}_s]\d s,
\end{equation}
where
\begin{align} \label{E}
 \nonumber
    \mathcal{E}_s:=&\, \gamma f(r_s)+f'(r_s)I_{\{r_{s} > 0 \}} \langle e_s, \beta_s \rangle+ \frac{f''(r_s)}{2}I_{\{r_{s} > 0 \}} (\rc^{2}(U_s)|\alpha_s^\top  e_s|^2+\sc^{2}(U_s)|\Delta_s^\top  e_s|^2) \\
    &+I_{\{r_{s} > 0 \}} \frac{f'(r_{s})}{2r_s}\big[ \rc^2(U_s) \left(\tr(\alpha_s \alpha_s^\top )-|\alpha_s^\top  e_s|^2\right) +\sc^{2}(U_{s}) \left(\tr(\Delta_s \Delta_s^\top )-|\Delta_s^\top  e_s|^2\right)\big].
    \end{align}

We now attempt to bound $\mathcal{E}_s$ so that $\E[\mathcal{E}_s]\leq (\delta)$.  To that end we will first find bounds for each term and substitute in the choice of $\gamma$.
 First, assumption \ref{b lipschitz} along with Lemma \ref{linear bounds} and the fact that $\mathcal{W}_1(\mu_{s},\nu_{s})\leq \E[r_{s}]$ yields the bound
\begin{equation}
\label{e1:e-betas-estimate}
\begin{aligned}   I_{\{r_{s} > 0 \}} \langle e_s, \beta_s \rangle &  \leq I_{\{r_{s} > 0 \}}( r_s\kappa(r_s)+L_1\mathcal{W}_1(\mu_s,\nu_s))\\ & \leq I_{\{r_s\geq \delta\}}r_s\kappa(r_s)+I_{\{r_s<\delta\}}\kappa_{\delta}\delta
+\frac{L_1D^2}{2\varphi(R_1)}\E[f(r_s)],\end{aligned}
\end{equation}
where $\kappa_r$ is define in \eqref{kappa_r}.
 Since $\sc^2(U_s)+\rc^2(U_s)= 1$ and $f'(r_s)\leq 1$, using Lemma \ref{A bound}, we get on the set $\{0 < r_s<\delta\}$
\begin{equation}\label{K_0 ref}
\begin{aligned}
&\frac{f'(r_{s})}{2r_s}\big[ \rc^2(U_s) \left(\tr(\alpha_s \alpha_s^\top )-|\alpha_s^\top  e_s|^2\right) +\sc^{2}(U_{s}) \left(\tr(\Delta_s \Delta_s^\top )-|\Delta_s^\top  e_s|^2\right)\big]  \\
 &\ \ \le  \frac{f'(r_{s})}{2r_s} \big[ \rc^2(U_s) \cdot 2 d L_{2} r_{s}^2 +   \sc^2(U_s)\cdot  2 d L_{2} r_{s}^{2}\big]  = dL_{2} f'(r_{s}) r_{s} \le dL_{2} \delta.
\end{aligned}
\end{equation}
Also recall  that  $\sc(U_s)=0$ and $\rc(U_s)=1$ for $r_s\geq \delta$. Therefore,  using \eqref{e1-lem3.1} in Lemma \ref{A bound}   and the definition of $A$ in \eqref{A definition}, we get on the set $\{r_s \geq \delta\}$,
\begin{equation}\label{A bound2}
\frac{\tr(\alpha_s \alpha_s^\top )-|\alpha_s^{\top } e_s|^2}{2r_s} = \frac{\tr(\Delta_s \Delta_s^\top )-|\Delta_s^{\top } e_s|^2}{2r_s} \le \frac{d\|\Delta_s\||r_s|^2-|\Delta^\top r_s|^2}{2r_s^3}\leq\frac{ A}{2}.
\end{equation}
In the above we used the bound $\tr (\Delta \Delta)^\top\leq rank(\Delta)|\Delta|^2\leq d|\Delta|^2$.
Now plugging  \eqref{e1:e-betas-estimate}, \eqref{K_0 ref}, and \eqref{A bound2} into   \eqref{E}, we obtain 
\begin{align} \label{E bound}
  \nonumber  \mathcal{E}_s\leq&\, \frac{L_1D^2}{2\varphi(R_1)}(\E[f(r_s)]-f(r_s)) 
   +I_{\{r_s<\delta\}}\left(cf(r_s) + \kappa_{\delta}\delta + dL_{2} \delta \right)   \\
  \nonumber   &+I_{\{r_s\geq \delta\}}\left(cf(r_s) + f'(r_s)\left(r_s\kappa(r_s)+\frac{\tr(\Delta_s \Delta_s^\top )-|\Delta_s^{\top } e_s|^2}{2r_s}\right)+\frac{f''(r_s)}{2}|\alpha_s^\top  e_s|^2\right)\\
    \leq&\,  \frac{L_1D^2}{2\varphi(R_1)}(\E[f(r_s)]-f(r_s))
     + I_{\{r_s <\delta\}}(c +\kappa_{\delta} +dL_{2} ) \delta\\
 \nonumber   &+ I_{\{r_s\geq \delta\}}\bigg(cf(r_s)+f'(r_s)\bigg(r_s \kappa(r_s)+\frac{A}{2}\bigg)+\frac{D^2}{2}f''(r_s)\bigg),
    \end{align} where we used \eqref{e:alpha e>D>0}, \eqref{A bound2}, to derive the last inequality.

Note that the first term on the right-hand side of \eqref{E bound} is zero in expectation.  The second term goes to zero as $\delta \to 0$.  Therefore if we can show the last term is  nonpositive,  we will have $\E[\mathcal{E}_{s}] \leq O(\delta)  $. 
To this end, we  have from \eqref{f second derivative} that
\begin{equation}
\label{e1:3.30}
\begin{aligned}& I_{\{r_{s} < R_{2} \}} \bigg[cf(r_s)+f'(r_s)\bigg(r_s\kappa(r_s)+\frac{A}{2}\bigg)+\frac{D^2}{2}f''(r_s) \bigg] \\ & \ \ \le  I_{\{r_{s} < R_{2} \}} \bigg[cf(r_s)+f'(r_s)(r_s\kappa^{*}(r_s)+A)+\frac{D^2}{2}f''(r_s) \bigg]\leq 0.\end{aligned}
\end{equation}
Next, using the facts that $\varphi(s) = \varphi(R_{1})$ and $\Phi(s) = \Phi(R_{1}) + \varphi(R_{1})(s-R_{1})$ for $s\ge R_{1}$, we can show 
\begin{equation}\label{e-PhiR2} 
 \int_{0}^{R_2}\Phi(s)\varphi(s)^{-1}\d s\geq \int_{R_1}^{R_2}\Phi(s)\varphi(s)^{-1}\d s \geq \frac{\Phi(R_2)(R_2-R_1)}{2\varphi(R_1)}.\end{equation} On the other hand, since $\Phi$ is increasing and concave with $\Phi(0) =0$, on the set $\{r_{s} \ge R_{2} \}$, we have $\frac{\Phi(r_{s})}{r_{s}} \le \frac{\Phi(R_{2})}{R_{2}} $ and hence \begin{equation}\label{e:Phi-concave}
\frac{r_{s}}{R_{2}} \ge \frac{\Phi(r_{s})}{\Phi(R_{2})}.
\end{equation}
Finally, we  note that for $r\geq R_2$, $f''(r)=0$  
and $f'(r)=\varphi(R_1)g(R_2)=\frac{2\varphi(R_1)}{D^2}$.  Now using the definition of $R_2$, we get on the set $\{r_{s} \ge R_{2} \}$
\begin{align*}
    f'(r_s)( r_s \kappa(r_s)+A) &=\frac{2\varphi(R_1)}{D^2}(r_s\kappa(r_s)+A)
    \leq \frac{2\varphi(R_1)}{D^2}\left(-r_s\left(\frac{D^2}{R_2(R_2-R_1)}+\frac{A}{R_2}\right)+A\right)\\
    &\leq -\frac{2\varphi(R_1)}{D^2}\frac{D^2r_s}{R_2(R_2-R_1)}
    = \frac{-2r_s\varphi(R_1)}{R_2(R_2-R_1)}
    \leq \frac{-2\varphi(R_1)\Phi(r_s)}{\Phi(R_2)(R_2-R_1)}\\
    &  \le  - \frac{\Phi(r_s)}{ \int_{0}^{R_2}\Phi(s)\varphi(s)^{-1}ds}  \le -\frac{4c}{D^2} \Phi(r_s) \le - \frac{4c}{D^2} f(r_{s}),
\end{align*}
where we used \eqref{e:Phi-concave}, followed by \eqref{e-PhiR2}, followed by the definition of $c$ \eqref{e:c-def}, and finally lemma \ref{linear bounds} to derive the last four  inequalities. 
This together with \eqref{f second derivative} imply that \begin{equation}
\label{e2:3.30}\begin{aligned}
& I_{\{r_{s} \ge R_{2} \}} \bigg[cf(r_s)+f'(r_s)\bigg(r_s\kappa(r_s)+\frac{A}{2}\bigg)+\frac{D^2}{2}f''(r_s) \bigg] \\ &\ \  \le I_{\{r_{s} \ge R_{2} \}}[cf(r_s)+f'(r_s)(r_s\kappa(r_s)+A)] \leq 0.\end{aligned}
\end{equation}

Using \eqref{e1:3.30} and \eqref{e2:3.30} in \eqref{E bound} and then taking expectations, we obtain $\E[\mathcal E_{s}] \le (c + \kappa_{\delta} + d L_{2}) \delta $; which, together with
  \eqref{expectation of exponential},  gives us
\begin{equation*}
    \E[e^{\gamma t}f(r_t)-f(r_0)]\leq (c+\kappa_{\delta}+dL_{2})\delta \frac{e^{\gamma t}}{\gamma}.
\end{equation*}
Then it follows that
\begin{align*}
    \mathcal{W}_\rho(\mu_t,\nu_t) &        \leq \E[f(r_t)]\leq e^{-\gamma t} \E[f(r_{0})] + \frac{  (c+\kappa_{\delta}+dL_{2})\delta}{\gamma}\\
     &    = e^{-\gamma t}\mathcal{W}_\rho(\mu_0,\nu_0)  + \frac{  (c+\kappa_{\delta}+dL_{2})\delta}{\gamma}.
\end{align*}  Recall that $\gamma$ is independent of the coupling, so it does not depend on $\delta$.  Therefore, since $\delta > 0$ can be chosen arbitrarily small,   \eqref{Wrho} follows.

Finally,   Lemma \ref{linear bounds} leads to
\begin{equation}\label{e:f(r_t)estimate}
    \mathcal{W}_1(\mu_t,\nu_t)\leq \frac{D^2}{2\varphi(R_1)}\E[f(r_{t})]\leq \frac{D^2}{2\varphi(R_1)}e^{-\gamma t}\mathcal{W}_\rho (\mu_0,\nu_0) \leq \frac{D^2}{2\varphi(R_1)}e^{-\gamma t}\mathcal{W}_1(\mu_0,\nu_0),
\end{equation}
which shows \eqref{W1}, completing the proof.
\end{proof}

\subsection{Contraction Result Under Dissipative Condition}

We now replace Assumption \ref{limsup kappa} with the following assumption.

\begin{assumption}[Drift]\label{drift}
There exist positive constants $R$ and $  \lambda$ such that 
\begin{equation*}
    \langle x, b(x,\mu) \rangle \leq -\lambda |x|^2 + L_4|x|\mu(|\cdot |), \quad \forall |x| \ge R.
\end{equation*}

\end{assumption}

\begin{theorem}\label{Contraction result 2}
Assume assumptions {\rm\ref{b lipschitz}, \ref{sigma lipschitz}, \ref{b growth}, \ref{bound 1}, \ref{bound 2*}}, and {\rm\ref{drift}} with $L_4\leq L_1 \leq \lambda/2$.  Let $\mu_t$ and $\nu_t$ denote the marginal laws of a strong solution $(X_t)$ to \eqref{SDE 1} with initial laws $\mu_{0}, \nu_{0}$, respectively.  Then there exist a concave, bounded, nondecreasing continuous function $f:\R_+ \to \R_+$ with $f(0)=0$, a Lyapunov function $V(x)$ and constants $c,\e,K_1,K_6 \in (0,\infty)$ such that
\begin{enumerate}
    \item[{\rm(i)}] For any $K\in (0,\infty)$ there is an $L_1^*>0$ such that for any $L_1<L_1^{*}$ if the initial laws have $\mu_0(V),\nu_0(V)\leq K$, then
    \begin{align}
        \W_{\rho_1}(\mu_t,\nu_t)&\leq e^{-ct}\W_{\rho_1}(\mu_0,\nu_0), \ \ \ \text{and} \label{contraction 1}\\
        \W_{1}(\mu_t,\nu_t) &\leq K_{1}e^{-ct}\W_{\rho_1}(\mu_0,\nu_0). \label{contraction 2}
    \end{align}
    \item[{\rm(ii)}] There is an $L_1^{**}>0$ such that for any $L_1<L_1^{**}$ and initial laws $\mu_0,\nu_0.$
    \begin{equation}
        \W_{\rho_1}(\mu_t,\nu_t)\leq e^{-ct}(\W_{\rho_1}(\mu_0,\nu_0)+K_6[\e \mu_{0}(V)+\e \nu_{0}(V)]^2), \label{contraction 3}
    \end{equation}
\end{enumerate}
where $ c=\frac12 \min\{\lambda/2,\eta D^2/8\}$ with $\eta$ given  below in \eqref{e:eta-defn-result2}, and the semi-metric $\rho_{1}$ is given by $$\rho_1(x,y)=f(|x-y|)(1+\e V(x)+\e V(y)).$$
\end{theorem}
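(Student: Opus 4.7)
The plan is to mirror the strategy of Theorem \ref{Contraction result} but with two critical modifications suited to the dissipative (rather than contractive-at-infinity) regime: the concave function $f$ is taken to be bounded, and the contraction metric is enriched by a multiplicative Lyapunov factor $(1+\e V(x)+\e V(y))$ so that $\rho_1$ dominates $|x-y|$ globally. First I would construct a Lyapunov function, the natural candidate being $V(x)=1+|x|^2$, and combine Assumption \ref{drift} with the uniform bound $\|\sigma(x)\|\le M$ to derive an inequality of the form
\begin{equation*}
2\langle x,b(x,\mu)\rangle + \tr(\sigma(x)\sigma(x)^\top) \le -2\eta V(x)+K_V+2L_4|x|\mu(|\cdot|)
\end{equation*}
valid on all of $\R^d$, where $\eta>0$ depends only on $\lambda, L_4, R, M$. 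The hypothesis $L_4\le \lambda/2$ is what lets the $\mu$-dependent cross term be absorbed into the negative linear part, and this step is where the constant $\eta$ appearing through
\begin{equation}\label{e:eta-defn-result2}
c=\tfrac12\min\{\lambda/2,\eta D^2/8\}
\end{equation}
comes from.

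Next, I would construct $f$ analogously to Section \ref{result 1} via an exponentially decreasing $\varphi$ and cumulative $\Phi$, but truncated past some level $R_*$ so that $f$ is bounded; this preserves the key relations $f'(r)\in(0,1]$ and a pointwise differential inequality akin to \eqref{f second derivative}. The core computation applies It\^o to $\rho_1(X_t,Y_t)$ along the coupling \eqref{e:coupling} and uses the product rule to decompose $d\rho_1$ into three pieces: (a) the $f$-drift multiplied by $(1+\e V(X_t)+\e V(Y_t))$, bounded using the estimates from the proof of Theorem \ref{Contraction result} (in particular \eqref{e1:e-betas-estimate}--\eqref{A bound2}) by
$-cf(r_t)(1+\e V(X_t)+\e V(Y_t))+\tfrac{L_1 D^2}{2\varphi(R_1)}\W_1(\mu_t,\nu_t)$ plus boundary terms; (b) the Lyapunov drift contribution $\e f(r_t)\bigl[\mathcal{A}^{\mu_0} V(X_t)+\mathcal{A}^{\nu_0}V(Y_t)\bigr]$, bounded by $-2\e\eta f(r_t)(V(X_t)+V(Y_t))+2\e K_V f(r_t)+$ (inhomogeneous $\mu_t,\nu_t$ terms); and (c) the cross-variation term $\e\, d\langle f(r_\cdot), V(X_\cdot)+V(Y_\cdot)\rangle_t$, which is $O(\e f'(r_t) r_t)$ and hence dominated by $f(r_t)$ up to constants thanks to Lemma \ref{linear bounds}. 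Choosing $\e$ small enough (depending only on $\eta, K_V, D, M, L_2$) forces these to combine into a pointwise drift estimate of the form $\le -2c\,\rho_1(X_t,Y_t)\,dt + (L_1\text{-Lipschitz error})\,dt$.

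The main obstacle is closing the loop on the self-consistent $\W_1(\mu_t,\nu_t)$ term appearing through Assumption \ref{b lipschitz}. The multiplicative factor gives $\rho_1(x,y)\ge \e f(|x-y|)(V(x)+V(y))\gtrsim |x-y|$ globally: the linear lower bound on $f$ near $0$ handles small separations, while for large separations $f$ is bounded below by a positive constant and $V(x)+V(y)\gtrsim 1+|x-y|$. This yields $\W_1\le K_1\W_{\rho_1}$ for some $K_1<\infty$. In part (i), where $\mu_0(V),\nu_0(V)\le K$, the Lyapunov inequality yields the uniform-in-time bound $\sup_t (\E[V(X_t)]+\E[V(Y_t)])\le C(K)$ with $C(K)$ depending only on $K,\eta,K_V$; this makes the constant translating $\rho_1$-mass into $\W_1$-mass uniform, and one then picks $L_1^*$ so small that $L_1^* K_1< c$, absorbing the Lipschitz error into the contraction rate and yielding \eqref{contraction 1}--\eqref{contraction 2} via Gr\"onwall. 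In part (ii) the initial Lyapunov mass may be arbitrary, so the excess $\mu_0(V)+\nu_0(V)$ must enter as an additive correction: using the decay $\E[V(X_t)]\le e^{-\eta t}\mu_0(V)+K_V/\eta$, one shifts the Gr\"onwall argument by an inhomogeneous term of order $e^{-ct}(\e\mu_0(V)+\e\nu_0(V))^2$ after an application of Cauchy--Schwarz on the cross terms $|x|\mu_t(|\cdot|)$, and takes $L_1<L_1^{**}\le L_1^*$ even smaller so that all remaining cross terms are absorbed, producing \eqref{contraction 3} with the advertised $K_6$.
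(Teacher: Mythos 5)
Your proposal has the right architecture — enrich the metric with a multiplicative Lyapunov factor $G(x,y)=1+\e V(x)+\e V(y)$, take $f$ bounded and concave (truncated at a level tied to the sublevel sets of $V$), use $\rho_1(x,y)\gtrsim |x-y|$ to close the self-consistency loop on $\W_1(\mu_t,\nu_t)$, and take $L_1$ small to absorb the Lipschitz error — and your split into the $f$-drift, the Lyapunov drift, and the cross-variation mirrors the paper's Steps 1--3. But your treatment of the cross-variation term is a genuine gap. You claim $\e\,\d[f(r_\cdot),V(X_\cdot)+V(Y_\cdot)]_t = O(\e f'(r_t)r_t)$ and is therefore dominated by $f(r_t)$ via Lemma \ref{linear bounds}, but this is false: since $\d V(X_t)$ carries the diffusion part $2\langle X_t,\sigma(X_t)\d B_t\rangle$, the cross variation scales like $\e f'(r_t)\,|\alpha_t^\top e_t|\, M\,(|X_t|+|Y_t|)$, which is of size $\e f'(r_t)|\alpha_t^\top e_t|^2 G(X_t,Y_t)/D$ — unbounded in $|X_t|$ and not small relative to $f(r_t)G$ near $r_t=0$. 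Shrinking $\e$ does not help, because the Lyapunov decay term you want to beat also scales with $\e$. The paper handles this by modifying the exponent to $h(r)=\frac{2}{D^2}\int_0^r(s\kappa^*(s)+A)\d s+\frac{8M}{D}r$, which injects an extra $-\frac{8M}{D}f'(r)$ into $f''(r)$; after multiplying $\d f(r_t)$ by $G(X_t,Y_t)$, the resulting negative term $-\frac{4Mf'(r_t)}{D}|\alpha_t^\top e_t|^2\rc^2(U_t)\,G(X_t,Y_t)$ is exactly what cancels the cross-variation. Your $f$ is built ``analogously to Section \ref{result 1}'' and does not contain this $\frac{8M}{D}r$ correction, so the argument does not close.

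Two smaller misstatements worth flagging. First, you attribute $\eta$ to the Lyapunov inequality (``$-2\eta V(x)+K_V+\dots$''), but in the paper the Lyapunov dissipation rate is simply $\lambda$ from Assumption \ref{drift}; $\eta$ is defined by $\eta^{-1}=\int_0^{R_4}\Phi(s)\varphi(s)^{-1}\d s$ and arises from the metric construction exactly as $c$ does in Theorem \ref{Contraction result}. The rate $c=\frac12\min\{\lambda/2,\eta D^2/8\}$ combines the two sources. Second, $\e$ is not a free smallness parameter to be sent down: it is pinned at $\e=\xi D^2/(16L)$ so that the Lyapunov decay $-\lambda\e V$ is strong enough to produce the $-\min\{\lambda/2,\eta D^2/8\}\,G$ term on $\{r_t\ge R_4\}$. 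The only parameter sent small in the theorem is $L_1$, and separately $\delta$ (the coupling transition width) is sent to zero at the end.
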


\begin{rem}
Note again the Lipschitz coefficient for the law dependence, $L_1$, needs to be small enough.  The choice of $c$ utilizes the rate, $\lambda$, at which the process returns to the ball, as well as the rate of convergence inside the ball due to the coupling.
\end{rem}
We will construct a function $f$ similar to that from Section \ref{result 1}, but we will first need the following lemma.

\begin{lemma}\label{dV lemma}
Let $V(x)=1+|x|^2$.  Suppose  Assumptions
{\rm\ref{b lipschitz}}, {\rm\ref{sigma lipschitz}}, {\rm\ref{b growth}},
and {\rm\ref{drift}} hold, and  $L_4 \leq L_1 \leq \frac{\lambda}{2}$.  Then there exists a constant $L \in (0,\infty)$ such that
\begin{equation}\label{dV(X_t) bound}
   \d V(X_t)\leq (L-\lambda V(X_t)-\lambda |X_t|^2+2L_1|X_t|\E[|X_t|])\d t + 2 \langle X_t, \sigma(X_t)\d B_t \rangle,
\end{equation}
and
\begin{equation}\label{E[V(X_T) bound}
    \E[V(X_t)]\leq \frac{L}{\lambda}+e^{-\lambda t}\E[V(X_0)].
\end{equation}
\end{lemma}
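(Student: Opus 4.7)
The plan is to apply It\^o's formula to $V(X_t)=1+|X_t|^2$ and then bound the resulting finite-variation part pointwise in the form required by \eqref{dV(X_t) bound}. It\^o gives
\[
dV(X_t) = \bigl(2\langle X_t, b(X_t,\mu_t)\rangle + \tr(\sigma(X_t)\sigma(X_t)^\top)\bigr) dt + 2\langle X_t, \sigma(X_t) dB_t\rangle,
\]
so the martingale term already matches what is claimed. The diffusion trace is bounded by $\tr(\sigma\sigma^\top)\le d M^2$ via the boundedness in Assumption \ref{bound 1}, which is in force throughout Section \ref{sec-main}; this constant will be absorbed into $L$.

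The key step is the global pointwise bound
\[
\langle x, b(x,\mu)\rangle \le -\lambda|x|^2 + L_1 |x|\mu(|\cdot|) + K, \qquad \forall x\in\R^d,\ \mu\in \mathcal P_1(\R^d),
\]
for some finite $K$. For $|x|\ge R$, Assumption \ref{drift} together with $L_4\le L_1$ yields this with $K=0$. For $|x|<R$, the crucial move is to apply Assumption \ref{b lipschitz} with $(y,\nu)=(0,\delta_0)$: using $\W_1(\mu,\delta_0)=\mu(|\cdot|)$ together with $|b(0,\delta_0)|\le L_3$ from Assumption \ref{b growth}, this gives
\[
\langle x, b(x,\mu)\rangle \le \kappa(|x|)|x|^2 + L_1 |x|\mu(|\cdot|) + L_3 |x| \le \kappa_R R^2 + L_3 R + L_1 |x|\mu(|\cdot|),
\]
where $\kappa_R:=\sup_{[0,R]}|\kappa|$. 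Since $-\lambda|x|^2\ge -\lambda R^2$ on $\{|x|<R\}$, taking $K:=(\lambda+\kappa_R)R^2 + L_3 R$ unifies both cases. Setting $L:=\lambda+2K+dM^2$ and noting $V(X_t)=1+|X_t|^2$ then delivers \eqref{dV(X_t) bound}.

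For \eqref{E[V(X_T) bound}, I would take expectation in \eqref{dV(X_t) bound} to kill the martingale, apply Jensen's inequality $(\E[|X_t|])^2\le \E[|X_t|^2]$, and use the hypothesis $2L_1\le\lambda$ to obtain $\tfrac{d}{dt}\E[V(X_t)]\le L-\lambda \E[V(X_t)]$; Gronwall's inequality then gives the claim. The main obstacle lies in the drift bound: the naive decomposition $\langle x,b(x,\mu)\rangle=\langle x,b(x,\mu)-b(0,\mu)\rangle+\langle x,b(0,\mu)\rangle$ routes the law-dependence through the growth bound on $b(0,\mu)$, producing a term with constant $L_3$ rather than $L_1$, and without control over the ratio $L_3/L_1$ this cannot be collapsed into $L_1 |x|\mu(|\cdot|)$. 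Comparing to the Dirac mass $\delta_0$ in Assumption \ref{b lipschitz} instead exploits the Lipschitz structure to extract exactly the required coefficient, after which the rest of the argument is routine.
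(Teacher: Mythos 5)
Your proposal is correct and follows essentially the same route as the paper: It\^o's formula, a case split on $\{|x|\ge R\}$ versus $\{|x|<R\}$, applying Assumption~\ref{drift} on the former and Assumption~\ref{b lipschitz} against the reference point $(0,\delta_0)$ on the latter, then Jensen plus $2L_1\le\lambda$ and Gronwall for the moment bound. The only cosmetic difference is that the paper keeps the small-$|x|$ bound as $R|b(0,\delta_0)|$ without spelling out $L_3$, and uses the $d(e^{\lambda t}V(X_t))$ integrating-factor form rather than the differential Gronwall inequality; these are equivalent.
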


\begin{proof}
By It\^o's formula with Assumptions \ref{bound 1} and \ref{drift} and the fact that $L_4 \leq L_1$, we have
\begin{align*}
    \d V(X_t) & = \left(2\langle X_t,b(X_t,\mu_t) \rangle + \tr(\sigma^\top (X_t)\sigma(X_t))\right)\d t +2 \langle X_t,\sigma(X_t)\d B_t \rangle \\
  &   \leq   I_{\{|X_t|\leq R\}} 2 \langle X_t, b(X_t,\mu_t) \rangle \d t + I_{\{|X_t|\geq R\}}2   (-\lambda |X_t|^2+L_1|X_t|\E[|X_t|]  )\d t \\
    & \quad + dM \d t+ 2\langle X_t,\sigma(X_t)\d B_t\rangle.
\end{align*}
We next recall the definition of $\kappa_R$ from \eqref{kappa_r} and use Assumption \ref{b lipschitz} to compute \begin{align*}
 I_{\{|X_t|\leq R\}}\langle X_t,b(X_t,\mu_t) \rangle 
 &= I_{\{|X_t|\leq R\}} [ \langle X_t, b(0,\delta_0) \rangle + \langle X_t,b(X_t,\mu_t)-b(0,\delta_0) \rangle] \\
 &\leq  I_{\{|X_t|\leq R\}} [\langle X_t,b(0,\delta_0)\rangle +  \kappa(|X_t|)|X_t|^2+L_1|X_t| \W_1(\mu_t,\delta_0)] \\
 & \leq \kappa_{R} R^{2} + R |b (0,\delta_0)| + L_{1}  I_{\{|X_t|\leq R\}} |X_t|  \E[|X_{t}|];   
\end{align*}
note that Assumption $\ref{b growth}$ implies $|b(0,\delta_0)|<\infty$.
Then we have \begin{align*}
\d V(X_t) & \leq  (-2\lambda  I_{\{|X_t|\geq R\}} |X_{t}|^{2} + 2 L_{1} |X_t|    \E[|X_{t}|] + C_{1} ) \d t + 2\langle X_t,\sigma(X_t)\d B_t\rangle \\
 &\le  (-2\lambda   |X_{t}|^{2} + 2 L_{1} |X_t|    \E[|X_{t}|] + C_{2} ) \d t + 2\langle X_t,\sigma(X_t)\d B_t\rangle
\end{align*} where $C_{1} = dL_{2} +\kappa_{R} R^{2} + R |b (0,\delta_0)|  < \infty $ and $C_{2} : = C_{1} + 2\lambda R^{2}$.
This establishes  \eqref{dV(X_t) bound} for $L=C_2+\lambda$.

Since $L_1\leq \frac{\lambda}{2}$, we can use  \eqref{dV(X_t) bound} and integration by parts to obtain
\begin{align*}
    \d(e^{\lambda t}V(X_t))&=\lambda e^{\lambda t}V(X_t)\d t +e^{\lambda t}\d V(X_t) \\
    & \leq e^{\lambda t}\left(L-\lambda|X_t|^2+\lambda|X_t|\E[|X_t|]\right)\d t + 2\langle X_t,\sigma(X_t)\d B_t\rangle,
\end{align*}
 which, in turn, leads to
\begin{align*}
    \E[e^{\lambda t}V(X_t)-V(X_0)]&\leq \int_0^t e^{\lambda s}\left(L-\lambda \E[|X_s|^2]+\lambda \E[X_s]^2\right)\d s\\
    &\leq \int_0^t e^{\lambda s}L\d s
     \leq \frac{L}{\lambda}e^{\lambda t}.
\end{align*}
This  gives \eqref{E[V(X_T) bound} and completes the proof.
\end{proof}

Now we construct a new function $f:\R_+ \to \R_+$ as follows.   Using the function $V(x)=1+|x|^2$ and the constants $\lambda, L$ in  Assumption \ref{drift}  and Lemma \ref{dV lemma},   we can define
\begin{align*}
    S_3 & = \{(x,y):V(x)+V(y)\leq \frac{2L}{\lambda}\}, \\
    S_4 & = \{(x,y):V(x)+V(y)\leq \frac{8L}{\lambda}\}, \\
    R_3 & = \sup_{(x,y)\in S_3}|x-y|, \\
    R_4 & = \sup_{(x,y)\in S_4}|x-y|.
\end{align*}
As before,  put $z=x-y$ and $r=|z|$. Then we define the function $f:\R_+ \to \R_+$ by
\begin{equation}
\label{e:f-defn-result2}
f(r)=\int_0^{r \wedge R_4}\varphi(s)g(s)\d s,
\end{equation}
where
\begin{align}\label{e:eta-defn-result2}
\nonumber \varphi(r)&=e^{-h(r)}, \\
\nonumber \Phi(r)& =\int_0^r\varphi(s)\d s,\\
\nonumber h(r)&=\frac{2}{D^2}\int_0^r(s\kappa^*(s)+A)\d s+\frac{8 \tilde M}{D}r, \\
\nonumber g(r)&=1-\frac{\eta}{4}\int_0^{r \wedge R_4}\Phi(s)\varphi(s)^{-1}\d s-\frac{\xi}{4}\int_0^{r\wedge R_3}\Phi(s)\varphi(s)^{-1}\d s, \\  \intertext{and }
  \eta^{-1}& =\int_0^{R_4}\Phi(s)\varphi(s)^{-1}\d s, \ \ \ \xi^{-1}=\int_0^{R_3}\Phi(s)\varphi(s)^{-1}\d s, \quad \tilde M=2M+\|\sigma(0)\|.
\end{align}  Note that $\|\sigma(x)+\sigma(y)\|\leq \tilde M$ for all $x,y$.  In the above and in the remainder of this subsection, $A$ is defined as in \eqref{A definition},    $\kappa^{*}(r) :  = \kappa(r) \vee A$,
and $D: =2\Lambda^{-1}-\sqrt{2}M$ as in \eqref{e:lam-lam-condition}.
Note $D>0$ by Assumption \ref{bound 2*}.
We further define the semi-metric
\begin{equation}
\label{e:semimetric}
\rho_1(x,y)=f(|x-y|)(1+\e V(x)+ \e V(y)), \quad x,y\in \R^{d},
\end{equation}
where $\e=\frac{\xi D^2}{16L}$.

Note that  $\varphi(r)\leq 1$ and is decreasing and   $1/2 \leq g(r) \leq 1$. Therefore, it follows that  for $r<R_4$
\begin{equation}\label{f<phi<r}
    r\varphi(R_4) \leq \Phi(r) \leq 2f(r) \leq 2\Phi(r) \leq 2r
\end{equation}
Furthermore, straightforward computations reveal that   for $r<R_4$, we have
\begin{align}
\label{f'-contraction2}    f'(r) &= \varphi(r)g(r), \intertext{and}
\nonumber    f''(r)  
    & = -h'(r)f'(r)-\left(\frac{\eta}{4}\Phi(r)\varphi(r)^{-1}+\frac{\xi}{4}\Phi(r)\varphi(r)^{-1}I_{\{r<R_3\}}\right)\varphi(r) \\
 \nonumber   &= -f'(r)\left(\frac{2}{D^2}(r\kappa^*(r)+A)+ \frac{8\tilde M}{D}\right)-\frac{\eta}{4}\Phi(r)-\frac{\xi}{4}\Phi(r)I_{\{r<R_3\}}\\
 \label{f''-contraction2}   &\leq -f'(r)\left(\frac{2}{D^2}(r\kappa(r)+A)+ \frac{8\tilde M}{D}\right)-\frac{\eta}{4}f(r)-\frac{\xi}{4}f(r)I_{\{r<R_3\}},
\end{align}
where the last inequality follows from \eqref{f<phi<r} and the fact that $\kappa(r)\leq \kappa^*(r)$.

We are now ready to prove Theorem \ref{Contraction result 2}.  The essence  of the proof is to  bound $\E[\rho_1(X_t,Y_t)]$, in which $(X_t,Y_t)$ is the  coupling process constructed in \eqref{e:coupling} in Section  \ref{sect-prelim} and $\rho_{1}$ is the semi-metric defined in \eqref{e:semimetric}.  In view of the definition of $\rho_{1}$, we will estimate the terms $f(r_{t})$, $G(X_t,Y_t)$, and their cross variation separately, where the function $f$ is defined in \eqref{e:f-defn-result2} and   $G(x, y) : = 1 + \e V(x) + \e V(y)$. These estimations together with integration by parts will then give us the desired result.


\begin{proof}[Proof of Theorem \ref{Contraction result 2}] 
As we explained earlier, the goal is to bound $\E[\rho_1(X_t,Y_t)]$, where $(X_t,Y_t)$ is the  coupling process given in \eqref{e:coupling}; 
note that an arbitrary $\delta \in (0, 1) $ was used in the construction of the coupling process. The rest of the  proof is divided into five steps.

\underline{Step 1.} {\em In this step,  we calculate a bound on $df(r_t)$.}
Using the coupling process $U_{t} = (X_{t}, Y_{t})$ and the same notation as those given in Section \ref{sect-prelim}, similar arguments as those for \eqref{df(r)} reveal that for $r_t \in (0,R_4)\setminus\{R_3\}$, we still have
\begin{align*}
   \lefteqn{ \d f(r_t)} \\ & \leq I_{\{r_{t} > 0 \}} f'(r_t)\bigg(\langle e_t, \beta_t \rangle + \frac{\rc^2(U_t)}{2r_t}\left(\tr(\alpha_t \alpha_t^\top )-|\alpha_t^\top e_t|^2\right) + \frac{\sc^2(U_t)}{2r_t}(\tr(\Delta_t\Delta_t^\top )-|\Delta_t^\top  e_t|^2)\bigg)\d t \\
    &\quad + I_{\{r_{t} > 0 \}}\frac{f''(r_t)}{2}\left(\rc^2(U_t)|\alpha_t^\top e_t|^2+\sc^2(U_t)|\Delta_t^\top e_t|^2\right)\d t \\
    &\quad +I_{\{r_{t} > 0 \}}  f'(r_t)\left(\rc(U_t) \langle e_t, \alpha_t \d B_t^2 \rangle +\sc(U_t) \langle e_t, \Delta_t \d B_t^1 \rangle\right).
\end{align*}
Since $f$ is  constant for $r\ge R_4$, we have $df(r_t)=0$  on the set  $\{r_t\ge R_4\}$.

 We now focus on the set $\{r_t <  R_4\}$.
First, the same observation for \eqref{e1:e-betas-estimate} leads to
\begin{equation*}
 I_{\{r_{t} > 0 \}}   \langle e_t, \beta_t \rangle \leq I_{\{r_t \geq \delta\}} r_t \kappa(r_t) + I_{\{r_t < \delta\}} \kappa_{\delta} \delta +L_1 \E[r_t].
\end{equation*}
This, together with \eqref{f'-contraction2},   \eqref{f''-contraction2},  and the fact that $0 < f'(r)\leq 1$,  allows us to get
\begin{align}\label{e:df(t)-rt<R2}
  \nonumber  I_{\{r_t<R_4\}}\d f(r_t)\leq&\, I_{\{r_{t} > 0 \}} \bigg[ f'(r_t)\left(I_{\{r_t<\delta\}}\kappa_{\delta}\delta + I_{\{r_t\geq \delta\}}r_t \kappa(r_t)+L_1\E[r_t]\right) \d t \\
  \nonumber  &+\frac{f'(r_t)}{2r_t} [ \sc^2(U_t) \left(\tr(\Delta_t^\top \Delta_t)-|\Delta_t^\top e_t|^2\right)   + \rc^2(U_t)  \left(\tr(\alpha_t^\top \alpha_t)-|\alpha_t^\top e_t|^2\right)]\d t  \\ 
  \nonumber  &-\frac{f'(r_t)}{2}\left(\frac{2}{D^2}(r_t\kappa(r_t)+A)+ \frac{8\tilde M}{D}\right)\left(\rc^2(U_t)|\alpha_t^\top e_t|^2+\sc^2(U_t)|\Delta_t^\top e_t|^2\right)\d t \\
 \nonumber   &-\left(\frac{\eta}{8}+\frac{\xi}{8} I_{\{r_t<R_3\}}\right) f(r_t) \left(\rc^2(U_t)|\alpha_t^\top e_t|^2+\sc^2(U_t)|\Delta_t^\top e_t|^2\right)\d t  \\
  \nonumber  &+f'(r_t)\left(\rc(U_t) \langle e_t, \alpha_t \d B _t^2 \rangle +\sc(U_t) \langle e_t, \Delta_t \d B _t^1 \rangle\right)\bigg] \\
\nonumber    \leq&\, \left(L_1f'(r_t)\E[r_t]+I_{\{0 < r_{t} < \delta \}}(\kappa_{\delta}+K_0)\delta\right)\d t  \\
\nonumber    &+I_{\{r_t\geq \delta\}}f'(r_t) \left[ r_t\kappa(r_t)+\frac{\tr(\Delta_t^\top \Delta_t)-|\Delta_t^\top e_t|^2}{2r_t}-\frac{|\alpha_t^\top e_t|^2}{D^2}\left(r_t\kappa(r_t)+A\right)\right]\d t \\
\nonumber    &-I_{\{r_t\geq \delta\}} \frac18 |\alpha_t^\top e_t|^2  f(r_t) \left( \eta + \xi I_{\{r_t<R_3\}}\right) \d t
    \\\nonumber  &-\frac{8\tilde Mf'(r_t)}{2D}I_{\{r_{t} > 0 \}} \left(\rc^2(U_t)|\alpha_t^\top e_t|^2+\sc^2(U_t)|\Delta_t^\top e_t|^2\right)\d t \\
    &+I_{\{r_{t} > 0 \}} f'(r_t)\left(\rc(U_t) \langle e_t, \alpha_t \d B _t^2 \rangle +\sc(U_t) \langle e_t, \Delta_t \d B _t^1 \rangle\right),
\end{align}
where again $\kappa_{\delta} : = \sup\{|\kappa(r)|:r \in [0,\delta]\}$. Note that   Lemma \ref{A bound} is used to derive the last inequality with $K_0= d L_{2}$    as
 in \eqref{K_0 ref}.  
 Also recall the bounds $|\alpha_t^\top e_t|^2\geq D^2$  and $f(r)\leq r$ derived in  \eqref{e:alpha e>D>0} and \eqref{f<phi<r}, respectively.  
 Then we can estimate the second and fourth terms of \eqref{e:df(t)-rt<R2} by
\begin{align}\label{e2:df(t)-rt<R2}
\nonumber \lefteqn{I_{\{0 < r_t<\delta\}}(\kappa_{\delta}+K_0)\delta  - I_{\{r_t\geq \delta\}} \frac18 |\alpha_t^\top e_t|^2  f(r_t) \left( \eta + \xi I_{\{r_t<R_3\}}\right)}\\
 \nonumber  & \leq I_{ \{0 < r_t<\delta\} }(\kappa_{\delta}+K_0)\delta -I_{ \{r_t\geq \delta\} } \frac{D^{2}}{8} f(r_t) \left( \eta+ \xi  I_{\{r_t<R_3\}}\right) \\
\nonumber   &  =  I_{ \{0 < r_t<\delta\} } \left((\kappa_{\delta}+K_0)\delta+  \frac{D^{2}}{8} f(r_t) \left( \eta+ \xi  I_{\{r_t<R_3\}}\right) \right)-   \frac{D^{2}}{8} f(r_t) \left( \eta+ \xi  I_{\{r_t<R_3\}}  \right) \\
\nonumber  &  \leq    I_{ \{0 < r_t<\delta\} } \left((\kappa_{\delta}+K_0)\delta+  \frac{D^{2}}{8} r_t \left( \eta+ \xi  I_{\{r_t<R_3\}}\right) \right)-   \frac{D^{2}}{8} f(r_t) \left( \eta+ \xi  I_{\{r_t<R_3\}}  \right)  \\
   & \leq    \left(\kappa_{\delta}+K_0+\frac{D^2(\eta +\xi)}{8}\right)\delta-  \frac{D^{2}}{8} f(r_t) \left( \eta+ \xi  I_{\{r_t<R_3\}}  \right).
\end{align}
Now, using the definition of $A$ in \eqref{A definition} and the bound \eqref{e:alpha e>D>0},  we notice the third term in \eqref{e:df(t)-rt<R2} is
\begin{equation}
\label{e3:df(t)-rt<R2}
f'(r_t)\left(r_t\kappa(r_t)+\frac{\tr(\Delta_t^\top \Delta_t)-|\Delta_t^\top e_t|^2}{2r_t}\right)-\frac{f'(r_t)|\alpha_t^\top e_t|^2}{D^2}\left(r_t\kappa(r_t)+A\right)\leq 0.
\end{equation}
Plugging \eqref{e2:df(t)-rt<R2} and \eqref{e3:df(t)-rt<R2} into \eqref{e:df(t)-rt<R2},   we have for $r_t<R_4$,
\begin{align}\label{df(rt)}
  \nonumber  \d f(r_t) \leq& \, - \frac{D^{2}}{8} f(r_t) \left( \eta+ \xi  I_{\{r_t<R_3\}}  \right) \d t  +\left(L_1f'(r_t))\E[r_t]+\delta\left(\kappa_{\delta}+K_{0}+\frac{D^2(\eta+\xi)}{8}\right)\right)\d t \\
 \nonumber   &- I_{\{r_{t} > 0 \}} \frac{4\tilde Mf'(r_t)}{D}\left(|\alpha_t^\top e_t|^2\rc^2(U_t)+|\Delta_t^\top e_t|^2\sc^2(U_t)\right)\d t \\
    &+I_{\{r_{t} > 0 \}} f'(r_t)\left(\rc^2(U_t) \langle e_t, \alpha_t \d B _t^2 \rangle +\sc^2(U_t) \langle e_t, \Delta_t \d B _t^1 \rangle\right).
\end{align}

\underline{Step 2:} {\em This step aims to derive an upper bound for $\d G(X_{t}, Y_{t})$ in \eqref{dG(X,Y)}. }  Recall that  $G(x, y) = 1 + \e V(x) + \e V(y)$.
Now we use \eqref{dV(X_t) bound} in  Lemma \ref{dV lemma} combined with the facts that $2 L_{1} \le \lambda$ and  $V(x)\geq 2 |x|$ to get
\begin{align*}
    \d V(X_t) &\leq (L-\lambda V(X_t))\d t  + \lambda V(X_t)\E[V(X_t)]\d t  + 2\langle X_t, \sigma(X_t)\d B_t \rangle, \\
    \d V(Y_t) &\leq (L-\lambda V(Y_t))\d t  + \lambda V(Y_t)\E[V(Y_t)]\d t  + 2\langle Y_t, \sigma(Y_t)\d\hat{B}_t \rangle
\end{align*}
where
\begin{align*}
    B_t&=\int_0^t \rc(U_s)\d B _s^1+\int_0^t \sc(U_s)\d B _s^2, \ \ \ \ \ \text{and}\\
    \hat{B}_t&=\int_0^t \rc(U_s)H_s \d B _s^1 + \int_0^t \sc(U_s)\d B _s^2.
\end{align*}
By the definition of $R_{1}$,  on the set $\{r_t\geq R_3  \}$, we have $V(X_t)+V(Y_t)\geq 2L/\lambda$  and hence  $2L\e -\lambda \e V(X_t)- \lambda \e V(Y_t) \leq 0$.  Similarly, on the set $\{r_t \geq R_4\}$, we have $V(X_t)+V(Y_t)\geq 8L/\lambda$ and so
\begin{align*}
    2L\e -\lambda \e V(X_t)- \lambda \e V(Y_t) &\leq 2L\e -\lambda \e \frac{4L}{\lambda}- \frac{\lambda \e}{2}V(X_t) - \frac{\lambda \e}{2}V(Y_t) \\
    &= -2L\e -\frac{\lambda \e}{2}V(X_t)-\frac{\lambda \e}{2}V(Y_t) \\
    &=-\frac{\lambda}{2}G(X_t,Y_t)+ \frac{\lambda}{2}-2L\e \\
    &\leq -\min\left\{\frac{\lambda}{2}, 2L\e \right\}G(X_t,Y_t).
\end{align*}
The last inequality follows because if $\lambda/2 \leq 2L\e$, then $ -\frac{\lambda}{2}G(X_t,Y_t)+ \frac{\lambda}{2}-2L\e \le -\frac{\lambda}{2}G(X_t,Y_t)$, and if $2L\e \leq \lambda/2$, then
\begin{align*}
    &-\frac{\lambda}{2}G(X_t,Y_t)+\frac{\lambda}{2}-2L\e = -2 L\e G(X_{t}, Y_{t}) + \bigg(\frac\lambda 2 - 2 L\e\bigg) (1-G(X_{t}, Y_{t}) ) \le -2 L\e G(X_{t}, Y_{t}).  \end{align*}
Furthermore, since $\e=\frac{\xi D^2}{16L}$ and $\eta \leq \xi$, we have $2 L\e \ge \frac{\eta D^{2}}{8}$ and therefore,
\[2\e L -\e \lambda V(X_t)- \e \lambda V(Y_t)\leq -\min\left\{\frac{\lambda}{2},\frac{\eta D^2}{8}\right\}G(X_t,Y_t).\]
Then by Lemma \ref{dV lemma},
\begin{align}\label{dG(X,Y)}
\nonumber    \d G(X_t,Y_t)=&\, \e \d V(X_t)+ \e \d V(Y_t) \\
 \nonumber   \leq& \left(2\e L -\e \lambda V(X_t)- \e \lambda V(Y_t) + 2\e L_1(V(X_t)\E[V(X_t)]+V(Y_t)\E[V(Y_t)])\right)\d t  \\
  \nonumber  &+2\e(\langle X_t,\sigma(X_t)\d B _t \rangle + \langle Y_t, \sigma(Y_t) \d\hat{B}_t \rangle) \\
    \leq& \left(I_{\{r_t <R_3\}}2L\e - I_{\{r_t \geq R_4\}}\min\left\{\frac{\lambda}{2},\frac{\eta D^2}{8}\right\}G(X_t,Y_t)\right)\d t  \\
\nonumber    &+ 2\e L_1(V(X_t)\E[V(X_t)]+V(Y_t)\E[V(Y_t)])\d t  \\
\nonumber    &+2\e(\langle X_t,\sigma(X_t)\d B _t \rangle + \langle Y_t, \sigma(Y_t) \d\hat{B}_t \rangle.
\end{align}

\underline{Step 3:} {\em We now estimate the  cross variation of $f(r_{t})$ and $G(X_{t}, Y_{t})$}.
 The cross variation of $f(r_{t})$ and $G(X_{t}, Y_{t})$ is given by 
\begin{equation}\label{d[fG]}
    \begin{aligned}
    \d[f(r),G(X,Y)]_t &= 2\e f'(r_t)\sc^2(U_t) \langle\Delta_t^{\top } e_t, \sigma(X_t)^\top X_t + \sigma(Y_t)^\top Y_t\rangle \d t  \\
    &\quad +2\e f'(r_t)\rc^2(U_t)\langle \alpha_t^{\top }e_t,\sigma(X_t)^\top X_t+(\sigma(Y_t)H_t)^\top Y_t\rangle \d t .
    \end{aligned}
\end{equation}
Furthermore,  we notice that using the Cauchy-Schwarz inequality, followed by the bound \eqref{e:sigma-nondeg} and the fact that $\|H\|=1$, we get
\begin{equation}
\begin{aligned}\label{alpha G}
   2\e &f'(r_t)\rc^2(U_t)\langle \alpha_t^{\top }e_t,\sigma(X_t)^\top X_t+(\sigma(Y_t)H_t)^\top Y_t\rangle \\
   & \leq 2\e f'(r_t)\rc^2(U_t)|\alpha_t^{\top } e_t|(|\sigma(X_t)^\top X_t|+|H_t^\top \sigma(Y_t)^\top Y_t|) \\
  & \le  2\e f'(r_t)\rc^2(U_t)|\alpha_t ^{\top }e_t| \tilde M (|X_{t}| + |Y_{t}|)\\
   & \leq \frac{\tilde M f'(r_t)}{D}\rc^2(U_t)|\alpha_t^{\top } e_t|^2G(X_t,Y_t),
\end{aligned}
\end{equation}
where we also used \eqref{e:alpha e>D>0} to derive the last inequality. 
Similarly, since $f'(r_t)\leq 1$ and $\sc^2(U_t)=0$ for $r_t\geq \delta$,   we have from Assumption \ref{bound 1} that
\begin{equation}\label{delta G}
    \begin{aligned}
     2\e&  f'(r_t)\sc^2(U_t)\langle\Delta_t^\top  e_t, \sigma(X_t)^\top X_t+\sigma(Y_t)^\top Y_t\rangle\\
  &  \leq \tilde M f'(r_t)\sc^2(U_t)|\Delta_t^\top  e_t|G(X_t,Y_t)\\
  &  \leq  \tilde M\sqrt{L_2}f'(r_t)\sc^2(U_t)r_tG(X_t,Y_t) \\
  &  \leq  \tilde M\sqrt{L_2}\delta G(X_t,Y_t).
    \end{aligned}
\end{equation}
Using \eqref{alpha G} and \eqref{delta G} in \eqref{d[fG]}, we obtain
\begin{align}
\label{e:d[fG]}
   \d[f(r),G(X,Y)]_t & \le  \frac{\tilde M f'(r_t)}{D}\rc^2(U_t)|\alpha_t^{\top } e_t|^2G(X_t,Y_t) \d t+ \tilde M\sqrt{L_2}\delta G(X_t,Y_t)\d t.
\end{align}

\underline{Step 4:} {\em In this step, we 
derive    upper  bounds for  $\d(\rho_1(X_t,Y_t))$ and $\E[e^{ct}\rho_1(X_t,Y_t)]$, in which $c$ is a suitable constant to be determined later.}
Now we can use integration by parts to combine \eqref{df(rt)}, \eqref{dG(X,Y)}, and \eqref{e:d[fG]}, 
and  recall  that $f'(r_t)=0$ for $r_t>R_4$ to get
\begin{align}\label{e:drho1}
\nonumber    \d(\rho_1(X_t,Y_t)) 
   &  = G(X_t,Y_t) \d f(r_t)+f(r_t)\d G(X_t,Y_t)+\d[f(r),G(X,Y)]_t \\
\nonumber  &  \leq  G(X_t,Y_t)\left[-\frac{\eta D^2}{8}f(r_t)I_{r_t<R_4}-\frac{\xi D^2}{8}f(r_t)I_{r_t<R_3}+L_1\E[r_t]\right]\d t  \\
\nonumber    &\quad +G(X_t,Y_t)\left[\left(\kappa_{\delta}+K_{0}+\frac{D^2(\eta+\xi)}{8}+\tilde M\sqrt{L_2}\right)\delta\right]\d t  \\
\nonumber    &\quad+G(X_t,Y_t)\left[-\frac{4\tilde Mf'(r_t)}{D}\left(|\alpha_t^\top e_t|^2\rc^2(U_t)+|\Delta_t^\top e_t|^2\sc^2(U_t)\right)\right]\d t  \\
\nonumber    &\quad+ f(r_t)\left[\frac{\xi D^2}{8}I_{\{r_t<R_3\}}-\min\left\{\frac{\lambda}{2}, \frac{\eta D^2}{8}\right\}G(X_t,Y_t)I_{\{r_t \geq R_4\}}\right]\d t  \\
\nonumber    &\quad+2f(r_t)L_1 \e(V(X_t)\E[V(X_t)]+V(Y_t)\E[V(Y_t)])\d t \\
\nonumber    &\quad+ \frac{\tilde M f'(r_t)}{D}\rc^2(U_t)|\alpha_t^{\top } e_t|^2G(X_t,Y_t) \d t+\d M_t\\
    &  \leq \left[-\min\left\{\frac{\lambda}{2},\frac{\eta D^2}{8}\right\}f(r_t)G(X_t,Y_t)+L_1\E[r_t]G(X_t,Y_t)\right]\d t \\
\nonumber    & \quad + G(X_t,Y_t) \left(\kappa_{\delta}+K_{0}+\frac{D^2(\eta + \xi)}{8}+\tilde M\sqrt{L_2}\right)\delta \d t  \\
\nonumber    & \quad +  2f(r_t)L_1 \e(V(X_t)\E[V(X_t)]+V(Y_t)\E[V(Y_t)])\d t  +\d M_t,
\end{align} where $M_t$ is a local martingale.

Next, we observe that there exists a constant $K_{1} > 0$ such that 
\begin{equation}
\label{e:r<=rho1(x,y)}
  |x-y| \leq K_1f(|x-y| ) G(x,y) = K_{1} \rho_{1}(x,y), \quad\text{ for all }x,y\in \R^{d}.
\end{equation}Indeed, when $|x-y|  < R_{2}$, \eqref{e:r<=rho1(x,y)} follows from \eqref{f<phi<r} and the fact that $G(x,y) \ge 1$. When  $|x-y| \ge R_{2}$, we have $f(|x-y| ) = f(R_{2})$ and hence \begin{displaymath}
|x-y| \le |x| + |y| \le \frac12 (V(x) + V(y)) \le \frac{1}{2\e f(R_{2})} f(|x-y| ) G(x,y).
\end{displaymath}
In view of \eqref{e:r<=rho1(x,y)}, we have the bound
\begin{equation}
\label{e2:drho1}
L_1 \E[r_t]G(X_t,Y_t)\leq L_1K_1 \E[\rho_1(X_t,Y_t)]G(X_t,Y_t).
\end{equation}
Furthermore, we have
\[\e \E[V(X_t)]V(X_t)+\e \E[V(Y_t)]V(Y_t)\leq \e^{-1} G(X_t,Y_t) \E[G(X_t,Y_t)].\]
Therefore
\begin{equation}
\label{e3:drho1}
2L_1 \e f(r_t)(\E[V(X_t)]V(X_t)+\E[V(Y_t)]V(Y_t))\leq \frac{2L_1}{\e}f(r_t)G(X_t,Y_t) \E[G(X_t,Y_t)].
\end{equation}
Now with $ c=\frac12 \min\{\lambda/2,\eta D^2/8\}$ and using integration by parts and  \eqref{e:drho1}, \eqref{e2:drho1},  and \eqref{e3:drho1}, we get
\begin{equation}
\label{e:E[e^ct rho1]}
\E[e^{ct}\rho_1(X_t,Y_t)]\leq \rho_1(X_0,Y_0)+\int_0^te^{cs}\E[\mathcal{J}_s]\d s,
\end{equation}
where
\begin{equation}\label{J_s}
\begin{aligned}
\mathcal{J}_s=&-c\rho_1(X_s,Y_s)+L_1K_1\E[\rho_1(X_s,Y_s)]G(X_s,Y_s)\\ &+\left(\kappa_{\delta}+K_{0}+\frac{D^2(\eta+\xi)}{8}+\tilde M\sqrt{L_2}\right)\delta G(X_s,Y_s)+\frac{2L_1}{\e}\E[G(X_s,Y_s)]\rho_1(X_s,Y_s).
\end{aligned}
\end{equation}

We first consider the third term in \eqref{J_s}. From Lemma
 \ref{dV lemma} and the fact that $c<\lambda$, we have
\begin{align*}
    \int_0^te^{cs}\E[G(X_s,Y_s)]\d s &=\int_0^te^{cs}\left(1+\e \E[V(X_s)]+\e \E[V(Y_s)]\right)\d s \\
    &\leq \int_0^te^{cs}\left(1+\frac{2L}{\lambda}+\e e^{-\lambda s}(\E[V(X_0)]+\E[V(Y_0)])\right)\d s \\
    &\leq \frac{1}{c}\left(1+\frac{2L}{\lambda}\right)e^{ct} + \frac{\e}{\lambda-c}(\E[V(X_0)+\E[V(Y_0)]).
\end{align*}
Consequently,  there exists a $K_2>0$ not depending on $\delta$ such that
\begin{align*}\left(\kappa_{\delta}+K_0+\frac{D^2(\eta + \xi)}{8}+\tilde M\sqrt{L_2}\right)\delta\int_0^te^{cs}\E[G(X_s,Y_s)]\d s
\\ \leq K_2\delta e^{ct}+ K_{2}\e (\E[V(X_0)+\E[V(Y_0)])\delta.\end{align*} 
Now by   Lemma
 \ref{dV lemma}, we have
\begin{equation}\label{E[G(X_t,Y_t)]}
\E[G(X_t,Y_t)]\leq 1+\frac{2L\e}{\lambda}+e^{-\lambda t}\e(\E[V(X_0)]+\E[V(Y_0)]).
\end{equation}
Next we estimate the second and fourth terms in \eqref{J_s}.  Define $K_3=L_1K_1+2L_1/\e$, $K_4=1+2L\e/\lambda$, and $K_5=K_5(X_0,Y_0)=\e(\E[V(X_0)]+\E[V(Y_0)])$.  Then we get
\begin{align*}
    L_1&K_1\int_0^te^{cs}\E[\rho_1(X_s,Y_s)]\E[G(X_s,Y_s)]\d s +\frac{2L_{1}}{\e}\int_0^te^{cs}\E[G(X_s,Y_s)]\E(\rho_1(X_s,Y_s)]\d s \\
    & = K_3\int_0^te^{cs}\E[G(X_s,Y_s)]\E[\rho_1(X_s,Y_s)]\d s \\
    & \leq K_3K_4\int_0^te^{cs}\E[\rho_1(X_s,Y_s)]\d s + K_3K_5\int_0^te^{(c-\lambda)s}\E[\rho_1(X_s,Y_s)]\d s .
\end{align*}

Then we have
\begin{align*}
    \int_0^te^{cs}\E[\mathcal{J}_s]\d s\leq& \, (K_2 e^{ct}+K_2 K_{5})\delta+K_3K_4\int_0^te^{cs}\E[\rho_1(X_s,Y_s)]\d s \\
    &+K_3K_5\int_0^te^{(c-\lambda)s}\E[\rho_1(X_s,Y_s)]\d s -c\int_0^te^{cs}\E[\rho_1(X_s,Y_s)]\d s .
\end{align*}
 Plugging this into  \eqref{e:E[e^ct rho1]} gives us
\begin{align*}\E[e^{ct}\rho_1(X_t,Y_t)]& \leq \E[\rho_1(X_0,Y_0)]+(K_2 e^{ct}+K_{2}K_5)\delta\\ & \quad +\left(K_3(K_4+K_5)-c\right)\int_0^te^{cs}\E[\rho_1(X_s,Y_s)]\d s.\end{align*}

\underline{Step 5:} {\em In this final step, we establish \eqref{contraction 1}, \eqref{contraction 2}, and \eqref{contraction 3}.}  First we recall that $c$ is independent of the coupling, and thus does not depend on $\delta$.
We define $$L_1^*=(K_1+2/\e)^{-1}(K_4+K_5)^{-1}c.$$   If $L_1<L_1^*$, then   $K_3(K_4+K_5)\leq c$ and hence
\[\E[\rho_1(X_t,Y_t)]\leq ( K_2 +K_2 K_{5} e^{-ct}) \delta+e^{-ct}\E[\rho_1(X_0,Y_0)].\]
Letting $\delta \to 0$,  we get \eqref{contraction 1}.
Furthermore, since $r_t\leq K_1\rho_1(X_1,Y_1)$,  \eqref{contraction 2} follows immediately from \eqref{contraction 1}.

Note
that $K_5$ depends on the initial probability measures $\mu_0,\nu_0$, so we only get a local result.  To get \eqref{contraction 3} we define $L_1^{**}=(K_1+2/\e)^{-1}K_4^{-1}c$.  Then, if $L_1<L_1^{**}$ so that $K_3K_4\leq c$, we have
\[\E[\rho_1(X_t,Y_t)]\leq e^{-ct}\W_{\rho_1}(\mu_0,\nu_0)+(K_2+K_{2}K_5e^{-ct})\delta+e^{-ct}K_3K_5\int_0^te^{(c-\lambda)s}\E[\rho_1(X_s,Y_s)]\d s.\]
Note that \eqref{f<phi<r} implies that $f(r)\leq f(R_4) \le R_{2}$ for all $r\ge 0$.  
Then, by equation \eqref{E[G(X_t,Y_t)]}, we have
\[\int_0^te^{(c-\lambda)s}\E[\rho_1(X_s,Y_s)]\d s\leq R_4\left(1+\frac{2L\e}{\lambda}+\e(\E[V(X_0)]+\E[V(Y_0)])\right)\int_0^te^{(c-\lambda)s}\d s.\]
Next, since $c=\frac{1}{2}\min\{\frac{\lambda}{2},\frac{D^2\eta}{8}\}<\lambda$ and $V(x)=1+|x|^2$, there exists a constant $K_6>0$ such that
\begin{align*}
\E[\rho_1(X_t,Y_t)] & \leq e^{-ct}\W_{\rho_1}(\mu_0,\nu_0)+e^{-ct}K_6(\e \E[V(X_0)]+ \e \E[V(Y_0)])^2\\
&\ \ +(K_2+K_{2}\e(\E[V(X_0)]+\E[V(Y_0)])e^{-ct})\delta.
\end{align*}
Letting $\delta \to 0$ gives \eqref{contraction 3}.
\end{proof}

\section{Applications}\label{sect-applications}
In this section, we explore the applications of the contraction results presented in Section \ref{sec-main}. We first show that under the conditions of Theorem \ref{Contraction result}, a propagation of chaos type result holds. Next, we show that under the conditions of Theorem \ref{Contraction result} or \ref{Contraction result 2}, \eqref{SDE 1} possesses  a unique invariant measure $\mu$ and that starting from any initial distribution $  \nu_{0}$, the distribution of $X_{t}$ converges to $\mu$ at an exponential rate.

We need the following lemma,
which  shows that the solution to \eqref{SDE 1} with initial condition $X_{0} =0$ is  $L^{1}$-bounded under certain conditions. The proof of the lemma is deferred to the Appendix in order not to disrupt the flow of presentation.

\begin{lemma}\label{nonexplosive lemma}
Suppose Assumptions {\rm\ref{b lipschitz}}, {\rm\ref{sigma lipschitz}},   {\rm\ref{b growth}}, and {\rm\ref{bound 1}} hold. Let $X_t(0)$ be the solution to \eqref{SDE 1} starting at $X_0=0$. If there exist  $R_0>0$ and $K>0$ such that
\begin{equation}\label{nonexplosive condition}
L_1+\kappa(r)<-K<0, \ \ \ \forall r>R_0,
\end{equation} then
we have
\[\sup_{t\geq 0}\E[|X_t(0)|]<\infty.\]
\end{lemma}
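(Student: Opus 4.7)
The idea is to apply It\^o's formula to $|X_t|^2$, take expectation (after a standard localization that handles the stochastic integral term), and exploit \eqref{nonexplosive condition} to derive an ODE inequality of the form
\[\frac{d}{dt}\E[|X_t|^2] \leq C - K' \E[|X_t|^2]\]
for positive constants $C$ and $K'$ independent of $t$. Since $X_0=0$ gives $\E[|X_0|^2]=0$, Gronwall then yields $\sup_{t\geq 0}\E[|X_t|^2] \leq C/K'$, and the claim $\sup_{t\geq 0}\E[|X_t(0)|]<\infty$ follows immediately from Jensen's inequality $\E[|X_t|]\leq (\E[|X_t|^2])^{1/2}$.

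To control the drift, I would apply Assumption \ref{b lipschitz} with $(y,\nu)=(0,\delta_0)$ (for which $\W_1(\mu_t,\delta_0)=\E[|X_t|]$) together with Assumption \ref{b growth} (which yields $|b(0,\delta_0)|\leq L_3$) to obtain
\[\langle X_t, b(X_t,\mu_t)\rangle \leq \kappa(|X_t|)|X_t|^2 + L_1\E[|X_t|]\,|X_t| + L_3|X_t|,\]
while the diffusion contribution is controlled by $\E[\tr(\sigma\sigma^\top)]\leq dM^2$ via Assumption \ref{bound 1}. I would then split $\E[\kappa(|X_t|)|X_t|^2]$ over the two regions $\{|X_t|\leq R_0\}$ (on which $|\kappa|$ is bounded by $\kappa_{R_0}:=\sup_{[0,R_0]}|\kappa|<\infty$, since $\kappa$ is continuous) and $\{|X_t|>R_0\}$ (on which \eqref{nonexplosive condition} gives $\kappa(r)<-L_1-K$), and apply Jensen's inequality $(\E[|X_t|])^2\leq \E[|X_t|^2]$ to the mean-field cross term. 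The $L_1$ contribution from the dissipativity and the $L_1$ contribution from the mean-field term then cancel, leaving a net decay proportional to $K\,\E[|X_t|^2]$, modulo a bounded remainder from the region $\{|X_t|\leq R_0\}$. The residual linear term $2L_3\E[|X_t|]$ is absorbed into this decay via Young's inequality $2L_3\E[|X_t|]\leq \tfrac{K}{2}\E[|X_t|^2] + 2L_3^2/K$, combined with Jensen.

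The main obstacle is the mean-field feedback $L_1(\E[|X_t|])^2$ in the expected drift estimate, which after Jensen has the same quadratic order as $\E[|X_t|^2]$ itself. It is precisely the strict inequality $L_1+\kappa(r)<-K$ in \eqref{nonexplosive condition}, rather than merely $\kappa(r)<-K$, that allows this term to be absorbed into the dissipation so that the coercive ODE inequality closes; under a weaker hypothesis the argument would fail. A secondary technicality is the localization needed to justify taking expectations of the It\^o differential, but this is routine since \eqref{SDE 1} admits a unique non-explosive strong solution with finite first moment under Assumptions \ref{b lipschitz}--\ref{b growth}.
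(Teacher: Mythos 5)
Your proposal is correct in outline, but it takes a genuinely different route from the paper's. You work at the level of the second moment $\E[|X_t|^2]$, applying It\^o to $|X_t|^2$, splitting over $\{|X_t|\le R_0\}$ and $\{|X_t|>R_0\}$, and invoking Jensen $(\E[|X_t|])^2\le\E[|X_t|^2]$ so that the mean-field quadratic $L_1(\E[|X_t|])^2$ is cancelled by the dissipative part $-(L_1+K)\E[|X_t|^2\,I_{\{|X_t|>R_0\}}] \le -(L_1+K)(\E[|X_t|^2]-R_0^2)$. The paper instead works at the level of the first moment: it applies It\^o to $g(|X_t|^2)$, where $g$ is an explicit $C^2$ approximation of $\sqrt{\cdot}$ (so $g(|X_t|^2)\approx|X_t|$), and the mean-field term then appears linearly as $L_1\E[|X_t|]$, which cancels against $-(L_1+K)\E[|X_t|\,I_{\{|X_t|>R_0\}}]$ without any Jensen step. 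What the paper's choice buys is that the stochastic integral in the It\^o expansion of $g(|X_t|^2)$ has a \emph{bounded} integrand (because $g'(s)\lesssim s^{-1/2}$ and $\sigma$ is bounded), so it is automatically a true martingale and no $L^2$ information on $X_t$ is ever required. Your route instead needs $\E[|X_t|^2]<\infty$ on finite time horizons to justify taking expectations; you call this ``routine'' and invoke the non-explosion result of \cite{Wang-18}, but that result only gives $\mathcal P_1$-regularity, not $\mathcal P_2$. With the deterministic initial condition $X_0=0$, bounded $\sigma$, and linear growth of $b$ from Assumptions \ref{b lipschitz} and \ref{b growth}, finiteness of second moments on $[0,T]$ does hold (e.g.\ by localizing with $\tau_n=\inf\{t:|X_t|\ge n\}$ and letting $n\to\infty$ with Fatou), but this is a step you should make explicit rather than attribute to the $\mathcal P_1$ well-posedness alone. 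Once that is patched, your $L^2$ argument closes and gives the stated conclusion via $\E[|X_t|]\le(\E[|X_t|^2])^{1/2}$, and your observation that the strict gap $L_1+\kappa(r)<-K$ (rather than just $\kappa(r)<0$) is what makes the feedback term absorbable is exactly the right reading of the hypothesis.
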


\subsection{Propagation of Chaos}

We now show that under the same assumptions as those in Theorem \ref{Contraction result}, a propagation of chaos type result holds. To this end, let $(Y_{0}^{i}, W^{i} )_{i\ge 1}$ be independent copies of $(X_{0}, W)$, in which $X_{0}$ is the initial condition of \eqref{SDE 1} with $\LL_{X_{0}} = \mu_{0}$ and $W$ is a standard $d$-dimensional Brownian motion.  
For each $i =1,2,\dots, n$, we define the $n$-particle system by
\begin{equation}\label{n particle system 2}
    \d X_t^{n,i}=b(X_t^{n,i},\mu_t^n)\d t+\sigma(X_t^{n,i})\d W_t^i,  \quad \ \ \ \mu_t^n=\frac{1}{n}\sum_{i=1}^n\delta_{X_t^{i,n}},
\end{equation}
where  
 $\delta_{X_t^{n,i}}$ is the Dirac measure and  so $\mu_t^n$ is the empirical mean.  We similarly define $n$-copies of the Mckean-Vlasov SDE \eqref{SDE 1} by
\begin{equation}\label{n copied McKean SDE}
\d Y_t^i=b(Y_t^i,\mu_t)\d t+\sigma(Y_t)\d W_t^i, \ \ \ \ \mu_t=\mathcal{L}(Y_t^i).
\end{equation}
For each $i$, we can define a coupling for $X_t^{n,i}$ of \eqref{n particle system 2} and $Y_{t}^{i}$  of  \eqref{n copied McKean SDE} similarly to that constructed in    Section \ref{sect-prelim}.
In addition,  we define $Z_t^i=X_t^{i,n}-Y_t^i$, and $r_t^i$, $e_t^i$, $U_t^i$, $\Delta_t^i$, $\beta_t^i$, and $\alpha_t^i$ in the same way as    in Section \ref{sect-prelim}.

\begin{proposition}\label{prop-chaos}
   Suppose  the conditions of 
   Theorem {\rm\ref{Contraction result}} and  {\rm\eqref{nonexplosive condition}}  hold. If the constant $\gamma$ given in  \eqref{e:gamma-defn} is positive,  
   then  the $n$-particle system \eqref{n particle system 2} converges to \eqref{n copied McKean SDE} in the following sense:
    \[\W_\rho(\mu_t^n,\mu_t)\leq e^{-\gamma t}\W_\rho(\mu_0^n,\mu_0)+\frac{n(d)^{1/2}}{\gamma},\]
    where
    \begin{equation}\label{n(d)}
  n(d)=C\begin{cases}
  n^{-1/2} \ \ \ \ \ &  \text{ if } \ \ d<4, \\
  n^{-1/2}\log n \ \ & \text{ if } \ \ d=4, \\
  n^{-2/d} \ \ \ \ \ &  \text{ if } \ \ d>4,  \end{cases}
  \end{equation}
  with the positive constant $C$ depends on the dimension $d$ and the moment.
 \end{proposition}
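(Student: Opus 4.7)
The strategy is to couple each particle $X_t^{n,i}$ of the interacting system with its independent McKean-Vlasov copy $Y_t^i$ via the construction of Section \ref{sect-prelim}, and then to replay the It\^o-Tanaka argument from the proof of Theorem \ref{Contraction result} on each pair. Every step of that proof carries over verbatim for $f(r_t^i) = f(|X_t^{n,i} - Y_t^i|)$, with one exception: Assumption \ref{b lipschitz} now produces
\begin{equation*}
\langle e_t^i,\beta_t^i\rangle \le \kappa(r_t^i)\, r_t^i + L_1\,\mathcal{W}_1(\mu_t^n,\mu_t),
\end{equation*}
whose Wasserstein piece compares the \emph{random} empirical measure to the \emph{deterministic} limiting law, rather than two marginals of a common coupling. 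Splitting with the triangle inequality through $\tilde\mu_t^n := \tfrac1n\sum_{j=1}^n \delta_{Y_t^j}$,
\begin{equation*}
\mathcal{W}_1(\mu_t^n,\mu_t) \le \frac{1}{n}\sum_{j=1}^n r_t^j + \mathcal{W}_1(\tilde\mu_t^n,\mu_t),
\end{equation*}
isolates the genuinely new ingredient.

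After averaging the per-particle differential inequality over $i$ and taking expectations, the first piece on the right is absorbed into the $\gamma$-contraction via Lemma \ref{linear bounds} and the bound $\mathcal{W}_1\le \frac{D^2}{2\varphi(R_1)}\mathcal{W}_\rho$ from Theorem \ref{Contraction result}; exchangeability of the $(Y^i)$ is what makes this collapse go through. What remains is a deterministic forcing term $L_1\,\E[\mathcal{W}_1(\tilde\mu_t^n,\mu_t)]$, which I would control by the Fournier-Guillin empirical measure estimate: assuming $\mu_t$ has a uniform-in-time moment of sufficiently high order, one has $\E[\mathcal{W}_2^2(\tilde\mu_t^n,\mu_t)] \le C\, n(d)$, so Jensen gives
\begin{equation*}
\E[\mathcal{W}_1(\tilde\mu_t^n,\mu_t)] \le \bigl(\E[\mathcal{W}_2^2(\tilde\mu_t^n,\mu_t)]\bigr)^{1/2} \le C\, n(d)^{1/2},
\end{equation*}
which explains the square root appearing in the statement. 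A Gronwall step with integrating factor $e^{\gamma s}$ (letting the mixed-coupling parameter $\delta\downarrow 0$ as in Theorem \ref{Contraction result}) then yields
\begin{equation*}
\tfrac{1}{n}\sum_i \E[f(r_t^i)] \le e^{-\gamma t}\tfrac{1}{n}\sum_i \E[f(r_0^i)] + \frac{C\,n(d)^{1/2}}{\gamma}.
\end{equation*}
The natural empirical coupling bounds $\E[\mathcal{W}_\rho(\mu_t^n,\tilde\mu_t^n)]$ by the left-hand side, and $\mathcal{W}_\rho(\tilde\mu_t^n,\mu_t)\le \mathcal{W}_1(\tilde\mu_t^n,\mu_t)$ (since $f'\le 1$) together with a second application of Fournier-Guillin closes the triangle inequality into the claimed bound.

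The main obstacle is the uniform-in-time moment control on $\mu_t$ required by the Fournier-Guillin rate. This is exactly what the extra hypothesis \eqref{nonexplosive condition} buys: Lemma \ref{nonexplosive lemma} provides $\sup_{t\ge 0}\E[|X_t(0)|]<\infty$, and the $\mathcal{W}_1$-contraction \eqref{W1} already proved in Theorem \ref{Contraction result} propagates this to any initial law in $\mathcal{P}_1(\R^d)$. The one nontrivial bookkeeping step is to upgrade the first-moment bound to the slightly higher moment that \eqref{n(d)} requires, by repeating the Lyapunov argument of Lemma \ref{nonexplosive lemma} with $V(x) = 1 + |x|^q$ for a suitable $q$. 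Beyond this, the proof is essentially a rerun of Theorem \ref{Contraction result} with one additional, integrable forcing term absorbed by Gronwall.
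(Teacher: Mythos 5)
Your proposal is essentially the paper's own argument: couple each $(X_t^{n,i},Y_t^i)$ pair as in Section~\ref{sect-prelim}, replay the It\^o-Tanaka estimate from Theorem~\ref{Contraction result}, insert the empirical measure $\tilde\mu_t^n=\frac1n\sum_j\delta_{Y_t^j}$ (the paper's $\nu_t^n$) via the triangle inequality, invoke \cite[Theorem~5.8]{CarmoD-18I} for $\E[\W_2^2(\tilde\mu_t^n,\mu_t)]\le C\,n(d)$, and close with Gronwall and $\delta\downarrow 0$. You also rightly flag that the Fournier-Guillin rate needs a uniform-in-time moment strictly above what Lemma~\ref{nonexplosive lemma} supplies, a point the paper leaves implicit behind the phrase that $C$ ``depends on \dots the moment''; your suggested fix (rerunning the Lyapunov argument with $V(x)=1+|x|^q$) is the natural repair.
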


\begin{proof}  First, we note that the process $Y_t^i$ is non-explosive for any initial value $Y_0^{i}$ with $\LL(Y^{i}_{0}) = \mu_{0} \in \mathcal P_{1}(\R^{d})$.  Indeed using  Lemma \ref{linear bounds} and \eqref{e:f(r_t)estimate},
 we have
\[\E[|Y_t^{i}-Y_t(0)|]\leq \frac{D^2}{2\varphi(R_1)}\E[f(|Y_t(0)-Y_t^{i})|]\leq \frac{D^2}{2\varphi(R_1)}e^{-\gamma t}\W_\rho (\delta_0,\mu_0),\] where $Y_t(0)$ denotes the solution to \eqref{SDE 1} with initial condition $Y_{0} =0$.
Thus, we have
\[\sup_{t\geq 0}\E[|Y_t(0)-Y_t^{i}]\leq \frac{D^2}{2\varphi(R_1)}\W_{\rho}(\delta_0,\mu_0) \le  \frac{D^2}{2\varphi(R_1)} \mu_{0}(|\cdot|) <\infty.\]
This, together with Lemma \ref{nonexplosive lemma}, implies that 
\begin{equation}\label{cor eq 1}
    \sup_{t\geq 0}\E[|Y_t^{i}|]< \infty, \quad \forall i.
\end{equation}

    From Lemma \ref{radial lemma}, we have
    \begin{align*}
        \d r_t^i & =  I_{\{r_t^i> 0\}}\bigg[ \frac{1}{2r_t^i} \big[ \rc^2(U_t^i)  (\tr(\alpha_t^i\alpha_t^{i\top })-|\alpha_t^{i\top }e_t^i|^2 )+  \sc^{2}(U_t^i)  (\tr(\Delta_t^i\Delta_t^{i\top })-|\Delta_t^{i\top }e_t^i|^2 )\big]\d t \\
        &\quad +   \langle e_t^i,\beta_t^i\rangle\d t + \sc(U_t^i)\langle e_t^i,\Delta_t\d B_t^{i,1}\rangle+\rc(U_t^i)\langle e_t^i,\alpha_t^i \d B_t^{i,2}\rangle \bigg].
    \end{align*}
    Furthermore, we can define $\mathcal{E}_s^i$ as in \eqref{E} to get
    \[\E[e^{\gamma t}f(r_t^i)-f(r_0^i)]\leq \int_0^t e^{\gamma s}\E[\mathcal{E}_s^i]\d s. \]
 Next, with 
    $\nu_t^n:=\frac{1}{n}\sum_{k=1}^n\delta_{Y_t^k}, $
    we have the inequality
    \begin{align*}
        \W_1(\mu_t^n,\mu_t)&\leq \W_1(\mu_t,\nu_t^n)+\W_1(\nu_t^n,\mu_t^n) \leq W_1(\mu_t,\nu_t^n)+\E[r_t^i].
    \end{align*}
    Substituting this into equation \eqref{e1:e-betas-estimate}, we get
    \begin{align*}
        I_{\{r_{s}^i > 0 \}} \langle e_s^i, \beta^{i}_s \rangle &  \leq I_{\{r_{s}^i > 0 \}}( r_s^i\kappa(r_s^i)+L_1\mathcal{W}_1(\mu_s^n,\mu_s))\\ & \leq I_{\{r_s^i\geq \delta\}}r_s^i\kappa(r_s^i)+I_{\{r_s^i<\delta\}}\kappa_{\delta}\delta+\frac{L_1D^2}{2\varphi(R_1)}\E[f(r_s^i)]+L_1\W_1(\mu_s,\nu_s^n),
    \end{align*}
    Thus, with the above inequality,    \eqref{E bound}  becomes
    \begin{align*}
        \mathcal{E}_s^{i} \leq & \ \frac{L_1D^2}{2\varphi(R_1)}\left(\E[f(r_s^i)]-f(r_s^i)\right)+I_{\{r_s^i<\delta\}}(c+\kappa_{\delta}+dL_2)\delta \\
        &+I_{\{r_s^i\geq \delta\}}\left(cf(r_s^i)+f'(r_s^i)\left(r_s^i\kappa(r_s^i)+\frac{A}{2}\right)+\frac{D}{2}f''(r_s^i)\right) +L_1\W_1(\mu_t,\nu_t^i).
    \end{align*}
  Therefore, following the remainder of the proof for Theorem \ref{Contraction result}, we get
  \begin{equation}\label{Cor eq 1}
  \E[e^{\gamma t}f(r_t^i)]\leq \E[f(r_0^i)]+\int_0^t e^{\gamma s}\E[\W_1(\mu_t,\nu_t^i)]\d s+(c+\kappa_{\delta}+dL_{2})\delta \frac{e^{\gamma t}}{\gamma}.
  \end{equation}
  Using H\"older's inequality and \cite[Theorem 5.8]{CarmoD-18I}, we obtain the bound
  \begin{equation*}
      \E[\W_1(\mu_t,\nu_t^i)] \leq \E[\W_2(\mu_t,\nu_t^i)] \leq \left(\E[\W_2^2(\mu_t,\nu_t^i)]\right)^{1/2}\leq n(d)^{1/2}.
  \end{equation*}
    Thus we have
  \[\int_0^te^{\gamma s}\E[\W_1(\mu_t,\nu_t^i)]\d s \leq \frac{e^{\gamma t}}{\gamma}n(d)^{1/2}, \]
  which combined with \eqref{Cor eq 1} yields
    \[\W_\rho(\mu_t^n,\mu_t)\leq e^{-\gamma t}\W_\rho(\mu_0^n,\mu_0)+\frac{n(d)^{1/2}}{\gamma}+ \frac{(c+\kappa_{\delta}+dL_{2})\delta}{\gamma}.\]
    Letting $\delta\to 0$ completes the proof.
\end{proof}

\begin{proposition}\label{prop-chaos 2}
    Suppose the conditions of Theorem \ref{contraction 2} and \eqref{nonexplosive condition} hold.  If the constant $c$ given in \eqref{contraction 2} is positive, then the $n$-particle system \eqref{n particle system 2} converges to \eqref{n copied McKean SDE} in the following sense:
    \begin{equation*}
        \W_{\rho_1}(\mu_t^n,\mu_t)\leq C(e^{-ct}\W_{\rho_1}(\mu_0^n,\mu_0)+\frac{n(d)^{1/2}}{c}),
    \end{equation*}
    where $n(d)$ is defined as in \eqref{n(d)} and some constant $C$.
\end{proposition}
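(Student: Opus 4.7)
The plan is to mirror the strategy used in the proof of Proposition \ref{prop-chaos}, but now running the Theorem \ref{Contraction result 2} computations pairwise on the coupling $(X_t^{n,i}, Y_t^i)$. First I would, for each $i$, construct the mixed synchronous-reflection coupling of Section \ref{sect-prelim} between the $i$-th particle of \eqref{n particle system 2} and the $i$-th McKean-Vlasov copy \eqref{n copied McKean SDE}, so that the drift of $X_t^{n,i}$ uses the empirical law $\mu_t^n$ while the drift of $Y_t^i$ uses the true law $\mu_t$. Letting $r_t^i = |X_t^{n,i} - Y_t^i|$ and keeping the notation $\beta_t^i, \Delta_t^i, \alpha_t^i, U_t^i, e_t^i$ of Section \ref{sect-prelim}, the upper bounds \eqref{df(rt)}, \eqref{dG(X,Y)}, and \eqref{e:d[fG]} from Theorem \ref{Contraction result 2} carry over verbatim except for the bound on $\langle e_s^i, \beta_s^i \rangle$.

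The only new ingredient is the triangle-inequality splitting
\[
\W_1(\mu_s^n, \mu_s) \le \W_1(\mu_s^n, \nu_s^n) + \W_1(\nu_s^n, \mu_s) \le \E[r_s^i] + \W_1(\nu_s^n, \mu_s),
\]
where $\nu_s^n = \frac{1}{n}\sum_{k=1}^n \delta_{Y_s^k}$, exactly as used in Proposition \ref{prop-chaos}. The term $\E[r_s^i]$ feeds into the $L_1 \E[r_s] G(X_s,Y_s)$ bookkeeping of Step 4 in the proof of Theorem \ref{Contraction result 2} without change, while $\W_1(\nu_s^n, \mu_s)$ becomes an inhomogeneous forcing term. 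By H\"older and \cite[Theorem 5.8]{CarmoD-18I}, together with the uniform moment estimate $\sup_t \E[|Y_t^i|^q]<\infty$ for some $q>2$ (obtained by combining Lemma \ref{nonexplosive lemma}, Lemma \ref{dV lemma}, and the Theorem \ref{Contraction result 2} bound on $\E[V(Y_t^i)]$), we get $\E[\W_1(\nu_s^n, \mu_s)] \le n(d)^{1/2}$.

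Running the integration-by-parts argument of Step 4 verbatim, with this extra inhomogeneity multiplied by $G(X_s^{n,i}, Y_s^i)$, yields for $L_1$ sufficiently small (so that $K_3(K_4+K_5) \le c$ in the notation of the Theorem \ref{Contraction result 2} proof)
\[
\E[e^{ct}\rho_1(X_t^{n,i}, Y_t^i)] \le \E[\rho_1(X_0^{n,i}, Y_0^i)] + C\int_0^t e^{cs}\E[\W_1(\nu_s^n, \mu_s)]\,\d s + C(1 + \E[V(Y_0^i)])\delta \cdot e^{ct},
\]
where the $\delta$-term collects the small-$r$ error as in Theorem \ref{Contraction result 2}. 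Bounding the middle integral by $\frac{e^{ct}}{c}\,n(d)^{1/2}$, letting $\delta \to 0$, dividing by $e^{ct}$, and taking infimum over couplings of the initial laws gives the claimed bound, after noting $\W_{\rho_1}(\mu_t^n,\mu_t) \le \E[\rho_1(X_t^{n,i}, Y_t^i)]$ by symmetry in the indices.

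The main obstacle I anticipate is uniform-in-$n$ control of the Lyapunov moments, specifically showing that $\sup_{t,n,i}\E[V(X_t^{n,i})]<\infty$ so that the constants $K_4$ and $K_5$ of Step 4 (and hence the threshold $L_1^{**}$) can be chosen independently of $n$. This should follow from applying It\^o's formula to $V$ on the symmetric particle system: the exchangeability of the particles allows one to replace $\mu_t^n(|\cdot|)$ by $\E[|X_t^{n,1}|]$, after which the same Gr\"onwall argument as in Lemma \ref{dV lemma} gives the uniform bound, provided $L_1 \le \lambda/2$. A secondary technical point is verifying that \cite[Theorem 5.8]{CarmoD-18I} applies in our setting, which it does once the uniform $q$-th moment bound on $Y_t^i$ has been established.
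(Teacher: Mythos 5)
Your approach matches the paper's intended route almost exactly --- indeed the paper's own ``proof'' of Proposition~\ref{prop-chaos 2} is essentially a one-sentence reference to Proposition~\ref{prop-chaos} and Theorem~\ref{Contraction result 2} (and is actually truncated mid-sentence in the source), so your write-up is more detailed than what the paper supplies. Your triangle-inequality splitting of $\W_1(\mu_s^n,\mu_s)$, the invocation of \cite[Theorem~5.8]{CarmoD-18I}, and the identification of uniform-in-$n$ Lyapunov moment control as the chief obstacle are all on target.

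There is one concrete gap, however. In $\mathcal J_s^i$, the new forcing term coming from the empirical-measure error is $L_1 \W_1(\nu_s^n,\mu_s)\, G(X_s^{n,i},Y_s^i)$, not $L_1\W_1(\nu_s^n,\mu_s)$ alone, because the $\W_1$ error enters through the drift bound \eqref{e1:e-betas-estimate} and hence gets multiplied by the Lyapunov factor $G$ in \eqref{e:drho1}. In your displayed inequality you implicitly replace $\E\bigl[\W_1(\nu_s^n,\mu_s)\,G(X_s^{n,i},Y_s^i)\bigr]$ by $C\,\E[\W_1(\nu_s^n,\mu_s)]$, but $G$ is unbounded (it grows like $|x|^2+|y|^2$), and $G(X_s^{n,i},Y_s^i)$ is \emph{not} independent of $\W_1(\nu_s^n,\mu_s)$, since $Y_s^i$ is one of the atoms of $\nu_s^n$ and $X_s^{n,i}$ is coupled to all the Brownian motions through the empirical drift. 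The natural fix is Cauchy--Schwarz, $\E[\W_1\,G]\le(\E[\W_1^2])^{1/2}(\E[G^2])^{1/2}$, but $\E[G^2]$ requires uniform-in-$t$ fourth-moment bounds on $X_s^{n,i}$ and $Y_s^i$, which is strictly stronger than what Lemma~\ref{dV lemma} ($V(x)=1+|x|^2$) delivers and stronger than the ``$q>2$'' you flag for the Carmona--Delarue estimate. Either one should establish these higher moments under additional hypotheses, or one must find a way to decouple $G(X_s^{n,i},Y_s^i)$ from $\nu_s^n$ (e.g.\ by removing the $i$-th atom from $\nu_s^n$ and controlling the resulting $O(1/n)$ discrepancy). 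Without addressing this the argument does not close; this appears to be a gap in the paper's sketch as well, so it is worth making explicit.
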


\begin{proof}
The result    can be deduced using a proof that mirrors closely the one for Proposition \ref{prop-chaos}, and defining instead $\mathcal{J}_s^i$ as in \eqref{J_s} and following the proof of Theorem \ref{contraction 2}.  We note however that 
\end{proof}

\subsection{Exponential Ergodicity} 
As an application of the contraction results, we demonstrate that the McKean-Vlasov process given by \eqref{SDE 1} is exponentially ergodic under the conditions of Theorems \ref{Contraction result} or \ref{Contraction result 2}. Let   $(X_{t})_{t\geq 0}$ denote the solution to \eqref{SDE 1} with initial condition $X_{0}$. Suppose  $\mathcal{L}(X_{0})=\mu$ and denote 
$P^*_{t}\mu=\mathcal{L}(X_{t})$ for $t\ge 0$. We have the semigroup property $P^{*}_{t+s} = P^{*}_{t} P^{*}_{s} $ for any $s, t\ge 0$. As usual, $\mu \in \mathcal P(\R^{d})$ is said to be an invariant measure for the semigroup $P_{\cdot}^{*}$ if  $P^{*}_{t}  \mu  = \mu$ for any $t\ge 0$.  


\begin{theorem}\label{invariant measure}
    Suppose that \eqref{nonexplosive condition}  and   the  conditions of  Theorem {\rm\ref{Contraction result}} hold with   $\gamma>0$.  
    Then $P_t^{*}$ admits an invariant measure $\mu \in \mathcal{P}_1(\R^{d})$ satisfying
    \begin{equation}\label{contraction to invariant measure}
        \mathcal{W}_1(P_t^{*}\nu_0,\mu) \leq C\mathcal{W}_1(\mu,\nu_0)e^{-\gamma t},\ \ \forall \nu_{0}\in \mathcal{P}_1(\R^{d}),
    \end{equation}
    where the constants $C$ and $\gamma$ are defined in Theorem {\rm\ref{Contraction result}}.
\end{theorem}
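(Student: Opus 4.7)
The plan is a classical fixed-point argument: since Theorem \ref{Contraction result} guarantees that $P_t^{*}$ is a strict contraction on $(\mathcal P_1(\R^d), \mathcal W_1)$ for large $t$, we build the invariant measure as the limit of the orbit started from $\delta_0$, using Lemma \ref{nonexplosive lemma} to keep the orbit inside $\mathcal P_1(\R^d)$.

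First I would invoke Lemma \ref{nonexplosive lemma}: under \eqref{nonexplosive condition}, the solution $X_t(0)$ with initial condition $X_0 = 0$ satisfies $K := \sup_{t \ge 0}\E[|X_t(0)|] < \infty$. In particular, $P_t^{*}\delta_0 \in \mathcal P_1(\R^d)$ for every $t \ge 0$, and $\mathcal W_1(P_t^{*}\delta_0, \delta_0) \le K$. Combined with \eqref{W1}, this gives for integers $m > n \ge 0$
\[
\mathcal W_1(P_m^{*}\delta_0, P_n^{*}\delta_0) = \mathcal W_1(P_n^{*}(P_{m-n}^{*}\delta_0), P_n^{*}\delta_0) \le C e^{-\gamma n}\mathcal W_1(P_{m-n}^{*}\delta_0,\delta_0) \le C K e^{-\gamma n},
\]
so $(P_n^{*}\delta_0)_{n\in \mathbb N}$ is Cauchy in $(\mathcal P_1(\R^d),\mathcal W_1)$. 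By completeness of $(\mathcal P_1(\R^d),\mathcal W_1)$ the sequence converges to some $\mu \in \mathcal P_1(\R^d)$.

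Next I would verify that $\mu$ is invariant. Since \eqref{W1} shows that $P_1^{*}$ is Lipschitz in $\mathcal W_1$, we can pass to the limit to get $P_1^{*}\mu = \lim_n P_1^{*}P_n^{*}\delta_0 = \lim_n P_{n+1}^{*}\delta_0 = \mu$. For an arbitrary $s \ge 0$, the commutativity $P_1^{*}P_s^{*} = P_s^{*}P_1^{*}$ shows that $P_s^{*}\mu$ is another fixed point of $P_1^{*}$; but fixed points of $P_1^{*}$ in $\mathcal P_1(\R^d)$ are unique, since if $\nu_1, \nu_2$ are two such fixed points then iterating \eqref{W1} gives $\mathcal W_1(\nu_1,\nu_2) = \mathcal W_1(P_n^{*}\nu_1, P_n^{*}\nu_2) \le C e^{-\gamma n}\mathcal W_1(\nu_1,\nu_2)$, which forces $\nu_1 = \nu_2$ once $n$ is large enough. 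Hence $P_s^{*}\mu = \mu$ for all $s \ge 0$. The bound \eqref{contraction to invariant measure} is then immediate from \eqref{W1} applied to the initial measures $\nu_0$ and $\mu$, using invariance:
\[
\mathcal W_1(P_t^{*}\nu_0, \mu) = \mathcal W_1(P_t^{*}\nu_0, P_t^{*}\mu) \le C e^{-\gamma t}\mathcal W_1(\nu_0,\mu).
\]

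The main obstacle is ensuring that the candidate limit $\mu$ actually lives in $\mathcal P_1(\R^d)$, i.e.\ that the first moments do not blow up along the orbit. This is precisely where the hypothesis \eqref{nonexplosive condition} and Lemma \ref{nonexplosive lemma} are used; without the uniform moment control $K < \infty$, the Cauchy estimate above would be vacuous. A secondary point of care is that $P_t^{*}$ is not a linear semigroup on measures (because of the McKean--Vlasov dependence of the drift on $\mathcal L(X_t)$), so Lipschitz continuity of $P_t^{*}$ in $\mathcal W_1$ must be taken from \eqref{W1} rather than from any standard Markov semigroup theory when passing to the limit in $P_1^{*}P_n^{*}\delta_0$.
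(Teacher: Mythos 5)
Your proof is correct and follows essentially the same route as the paper's: use Lemma \ref{nonexplosive lemma} to get a uniform first-moment bound on the orbit from $\delta_0$, combine it with the contraction \eqref{W1} and the flow property $P^{*}_{t+s}=P^{*}_{t}P^{*}_{s}$ to get a Cauchy sequence, pass to the limit, and then derive \eqref{contraction to invariant measure} from \eqref{W1}. The only stylistic deviation is in verifying invariance: you deduce $P^{*}_{s}\mu=\mu$ for all $s$ from $P^{*}_1\mu=\mu$, commutativity, and uniqueness of the fixed point of $P^{*}_1$, whereas the paper runs a direct triangle-inequality argument with the shift coupling identity \eqref{Transition of Dirac measure}; both are valid and of comparable length.
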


\begin{proof}
We adopt the shift coupling idea from the proof of \cite[Theorem 3.1]{Wang-18}.  Let $\delta_0$ be the Dirac measure at $0$.  Then $P_t^{*}\delta_0=\mathcal{L}(X_t)$.
Next we define for any $s\geq 0$,  $(\Bar{X}_t := X_{t+s}(0))_{t\geq0}$.  Note that  $\Bar{X}_t$ solves
\[\d\Bar{X}_t=b(\Bar{X}_t,\mathcal{L}(\bar{X}_t)\d t+\sigma(\Bar{X}_t)\d\Bar{W}_t, \ \ \Bar{X}_0=X_s(0)\]
for the $d$-dimensional Brownian motion $\Bar{W}_t=W_{t+s}-W_s$.  Since \eqref{SDE 1} has a unique solution, we have
\begin{equation}\label{Transition of Dirac measure}
P^*_t(P^*_s\delta_0)=\mathcal{L}(\Bar{X}_t)=\mathcal{L}(X_{t+s}(0))=P^*_{t+s}\delta_0, \ \ \forall s,t\geq 0.
\end{equation}
Now Theorem \ref{Contraction result} and equation \eqref{Transition of Dirac measure} combined with $\mathcal{L}(\Bar{X}_0)=P^*_s\delta_0$ gives us
\begin{align*}
    \mathcal{W}_1(P^*_{t+s}\delta_0,P^*_t\delta_0)&=\mathcal{W}_1(\mathcal{L}(X_t(P^*_s\delta_0)),\mathcal{L}(X_t(0))) \\
    &\leq C\mathcal{W}_1(P^*_s\delta_0,\delta_0)e^{-\gamma t} \\
    &=Ce^{-\gamma t}\E[|X_s(0)|], \ \ \ s,t\geq0
\end{align*}
Thanks to Lemma \ref{nonexplosive lemma}, $\sup_{s\geq0}\E[|X_s(0)|]<\infty$. Therefore, we have
\[\lim_{t\to\infty}\sup_{s\geq 0}\mathcal{W}_1(P^*_t\delta_0,P^*_{t+s}\delta_0)=0.\]
Consequently,  $\{P^{*}_t\delta_0\}_{t\geq 0}$ is Cauchy in $\mathcal{W}_1$. Thus, there exists some $\mu\in \mathcal{P}_1(\R^{d})$ such that 
\begin{equation} \label{cauchy}
    \lim_{t\to\infty}\mathcal{W}_1(P^*_t\delta_0,\mu)=0.
\end{equation}
Now, we combine \eqref{cauchy} with Theorem \ref{Contraction result} to derive
\[\lim_{t\to \infty}\mathcal{W}_1(P^*_s\mu,P^*_sP^*_t\delta_0)\le \lim_{t\to \infty}Ce^{-\gamma s}\mathcal{W}_1(\mu, P_{t}^{*} \delta_{0})= 0, \ \ s\geq 0.\]
This, in turn,   gives us
\begin{align*}
    \mathcal{W}_1(P_s^{*}\mu,\mu) &\leq \lim_{t\to\infty}(\mathcal{W}_1(P^*_s\mu,P^*_sP^*_t\delta_0)+\mathcal{W}_1(P^*_sP^*_t\delta_0,P^*_t\delta_0)+\mathcal{W}_1(P^*_t\delta_0,\mu))\\
    &\leq \lim_{t\to\infty}\mathcal{W}_1(P^*_sP^*_t\delta_0,P^*_t\delta_0)=\lim_{t\to\infty}\mathcal{W}_1(P^*_{t+s}\delta_0,P^*_t\delta_0)=0.
\end{align*}
As a result, $\mu$ is an invariant probability measure.  Furthermore, by Theorem \ref{Contraction result}, we have
\[\mathcal{W}_1(P^*_t\nu_0,\mu)=\mathcal{W}_1(P^*_t\nu_0,P^*_t\mu)\leq Ce^{-\gamma t}\mathcal{W}_1(\nu_0,\mu), \ t\geq 0;\] this establishes \eqref{contraction to invariant measure} and 
completes the proof.
\end{proof}

To proceed,  we  use 
 Theorem \ref{Contraction result 2} to derive exponential ergodicity for the semigroup $P_{\cdot}^{*}$ of \eqref{SDE 1}. 

\begin{theorem}\label{contraction to invariant measure 2}
Given the assumptions of Theorem {\rm\ref{Contraction result 2}}, 
$P^{*}_t$ admits an invariant measure $\mu \in \mathcal{P}_1(\R^{d})$ such that
\begin{equation}
    \W_1(P_t^*\nu_0,\mu)\leq K\W_{\rho_1}(\mu,\nu_0)e^{-ct}, \ \ \forall \nu_{0}\in \mathcal{P}_1(\R^{d}),
\end{equation}
where $K$, $c$, and $\rho_1$ are defined in Theorem {\rm\ref{Contraction result 2}}.
\end{theorem}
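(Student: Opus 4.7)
The plan is to mimic the proof of Theorem \ref{invariant measure} almost verbatim, but with Theorem \ref{Contraction result 2} substituted for Theorem \ref{Contraction result} and the semi-metric $\rho_1$ (with its $V$-weight) in place of $\rho$. The starting point is again the shift identity $P^{*}_{t+s}\delta_0 = P^{*}_t(P^{*}_s\delta_0)$, obtained by noting that $\bar X_t := X_{t+s}(0)$ solves \eqref{SDE 1} driven by the shifted Brownian motion $\bar W_t = W_{t+s}-W_s$ with initial law $P^{*}_s\delta_0$, together with uniqueness in law.

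The core estimate is then obtained by applying Theorem \ref{Contraction result 2}(ii) to the pair of initial laws $P^{*}_s\delta_0$ and $\delta_0$, which yields
\begin{equation*}
\W_{\rho_1}(P^{*}_{t+s}\delta_0,P^{*}_t\delta_0)\le e^{-ct}\bigl[\W_{\rho_1}(P^{*}_s\delta_0,\delta_0)+K_6(\e\,\E[V(X_s(0))]+\e V(0))^2\bigr].
\end{equation*}
Since $V(x)=1+|x|^2$ is bounded at $0$ and Lemma \ref{dV lemma} gives $\sup_{s\ge 0}\E[V(X_s(0))]\le L/\lambda+1$, the bracket is bounded uniformly in $s$; moreover, since $f$ is bounded and $\W_{\rho_1}(P^{*}_s\delta_0,\delta_0)\le \|f\|_\infty(1+\e\,\E[V(X_s(0))]+\e)$, the whole right-hand side tends to $0$ as $t\to\infty$ uniformly in $s\ge 0$. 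Thus $\{P^{*}_t\delta_0\}$ is Cauchy in $\W_{\rho_1}$ and, via the comparison $\W_1\le K_1\W_{\rho_1}$ coming from \eqref{e:r<=rho1(x,y)}, also in $\W_1$. By completeness of $(\mathcal P_1(\R^d),\W_1)$ there is $\mu\in\mathcal P_1(\R^d)$ with $\W_1(P^{*}_t\delta_0,\mu)\to 0$; a lower-semicontinuity/Fatou argument applied to $V$ further yields $\mu(V)\le L/\lambda+1<\infty$, which is what permits us to use Theorem \ref{Contraction result 2} later with $\mu$ as one of the initial laws.

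Invariance of $\mu$ is then verified exactly as in the proof of Theorem \ref{invariant measure}: for each fixed $s\ge 0$,
\begin{equation*}
\W_{\rho_1}(P^{*}_s\mu,\mu)\le \W_{\rho_1}(P^{*}_s\mu,P^{*}_sP^{*}_t\delta_0)+\W_{\rho_1}(P^{*}_{t+s}\delta_0,P^{*}_t\delta_0)+\W_{\rho_1}(P^{*}_t\delta_0,\mu),
\end{equation*}
where the first term is controlled by Theorem \ref{Contraction result 2}(ii) applied to $\mu$ and $P^{*}_t\delta_0$, using the uniform bounds on both $V$-moments, and the last two vanish as $t\to\infty$. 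The exponential contraction to $\mu$ then follows by invoking Theorem \ref{Contraction result 2} once more with the initial laws $\nu_0$ and $\mu$:
\begin{equation*}
\W_1(P^{*}_t\nu_0,\mu)=\W_1(P^{*}_t\nu_0,P^{*}_t\mu)\le K_1\,\W_{\rho_1}(P^{*}_t\nu_0,P^{*}_t\mu)\le K_1 e^{-ct}\bigl(\W_{\rho_1}(\nu_0,\mu)+K_6[\e\nu_0(V)+\e\mu(V)]^2\bigr),
\end{equation*}
which after absorbing the bounded quantity $\mu(V)$ and the $V$-moment dependence into the constant $K$ (and noting that the $[\,\cdot\,]^2$ term is dominated by a constant multiple of $\W_{\rho_1}(\nu_0,\mu)$ when the latter carries the $V$-weight) gives the claimed inequality.

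The main obstacle is the handling of the additive term $K_6[\e\nu_0(V)+\e\mu(V)]^2$ that Theorem \ref{Contraction result 2}(ii) produces: it is \emph{not} a priori bounded by $\W_{\rho_1}(\nu_0,\mu)$, so some care is needed to justify rewriting the final estimate in the clean form displayed in the statement. The cleanest route is probably to argue that, because $\mu(V)$ is finite and controlled by Lemma \ref{dV lemma}, this quadratic term can be absorbed into a redefined constant $K$ that depends on the uniform $V$-bound of the invariant measure, or alternatively to state the result with an additive $e^{-ct}$ remainder and then note that, at the cost of a larger multiplicative constant, the remainder is dominated by $\W_{\rho_1}(\nu_0,\mu)e^{-ct}$ whenever $\nu_0\in\mathcal P_1(\R^d)$. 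This bookkeeping of constants, rather than any new analytic idea, is where the proof will require the most attention.
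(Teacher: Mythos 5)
Your proposal follows the same overall outline as the paper (shift identity, Cauchy sequence, invariance, final contraction), but it departs from the paper's proof in one decisive place: you invoke part (ii) of Theorem \ref{Contraction result 2} (inequality \eqref{contraction 3}), whereas the paper uses part (i), namely \eqref{contraction 2}. This choice creates the quadratic remainder $K_6[\e\nu_0(V)+\e\mu(V)]^2$ that you then have to cope with, and as you yourself flag, it is not clear that it can be absorbed. In fact it cannot: when $\nu_0=\mu$ the Wasserstein quantity $\W_{\rho_1}(\nu_0,\mu)$ vanishes while the quadratic remainder is strictly positive, so the remainder is \emph{not} dominated by any constant multiple of $\W_{\rho_1}(\nu_0,\mu)$, and absorbing it into $K$ would force $K$ to depend on $\nu_0(V)$, contradicting the fixed constant in the statement. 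Both of the patches you sketch in your last paragraph therefore fail; this is a genuine gap, not mere bookkeeping.

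The fix, and what the paper actually does, is to use part (i) throughout. The measures $\delta_0$ and $P_s^*\delta_0$ satisfy $\delta_0(V)=V(0)=1$ and, by \eqref{E[V(X_T) bound} in Lemma \ref{dV lemma}, $\sup_{s\ge0}(P^*_s\delta_0)(V)=\sup_{s\ge0}\E[V(X_s(0))]\le L/\lambda+1$, so their $V$-moments are uniformly bounded by a single constant and \eqref{contraction 2} applies to the whole family. This yields the clean bound $\W_1(P^*_{t+s}\delta_0,P^*_t\delta_0)\le K_1e^{-ct}\W_{\rho_1}(P^*_s\delta_0,\delta_0)$ with no additive remainder, and then the Cauchy argument, the identification of the invariant measure $\mu$ (whose $V$-moment is finite by the same bound and Fatou), and the final contraction estimate $\W_1(P_t^*\nu_0,\mu)\le K_1 e^{-ct}\W_{\rho_1}(\nu_0,\mu)$ all go through exactly as in the proof of Theorem \ref{invariant measure}. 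Part (ii) is designed for initial laws with unknown or unbounded $V$-moments, which is precisely the situation you do \emph{not} have here.
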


\begin{proof}  
We first note that Lemma \ref{dV lemma} says that $\sup_{t \ge 0} \E[V(X_{t}(0)) ] < \infty$, where $X_{t}(0)$ denotes the solution to \eqref{SDE 1} with initial condition $X_{0} = 0$. This, together with the fact that $f(x) \le f(R_{2})$ with the function $f$  defined in \eqref{e:f-defn-result2} implies that $$\sup_{t \ge 0} \E[\rho_{1}(X_{t}(0), 0) ]  \le\sup_{t \ge 0} \E[f(|X_t(0)|)(1+\e V(X_t(0))+\e V(0)) ] < \infty.$$
Next, we use \eqref{Transition of Dirac measure} 
and  \eqref{contraction 2} to obtain  
\begin{equation}\label{contraction 2 eq 1}
\begin{aligned}
\lim_{t\to \infty}\sup_{s\geq 0} \W_1(P_{t+s}^*\delta_0,P_t^*\delta_0)&=\lim_{t\to \infty}\sup_{s\geq 0} \W_1(\LL(X_t(P_s\delta_0)),\LL(X_t(0)))\\
&\leq \lim_{t\to \infty}\sup_{s\geq 0}Ke^{-ct}\W_{\rho_1}(P_s^*\delta_0,\delta_0)\\
&=\lim_{t\to \infty}\sup_{s\geq 0} K\E[\rho_1(X_s(0),0)]e^{-ct} \\
& = 0.
\end{aligned}
\end{equation} The rest of the argument is similar to that in proof of Theorem \ref{invariant measure}.
\end{proof}

\appendix
\section{Proofs of Several Lemmas}\label{sect-appendix}
\begin{proof}[Proof of Lemma \ref{radial lemma}]
We use  the idea  in the proof of  \cite[Lemma 2]{Zimmer-17}.  First we consider the map $Z_t \to |Z_t|^2=r_t^2$.  Then by It\^o's formula, we have
\begin{equation}\label{d|Z|^2}
\begin{aligned}
    \d(|Z_t|^2)=&(2\langle Z_t, \beta_t \rangle +\rc^{2}(U_t)\tr(\alpha_t\alpha_t^\top ) + \sc^{2}(U_t)\tr(\Delta_t\Delta_t^\top ))\d t \\
    &+2\sc(U_t)\langle Z_t, \Delta_t \d B _t^1 \rangle + 2\rc(U_t) \langle Z_t, \alpha_t \d B _t^2 \rangle
\end{aligned}
\end{equation}
First note that by \cite[Theorem 2.1]{Wang-18}, there exists a function $F(t)$ such that $\sup_{s\in[0,T]}\E[|Z_s|]\leq F(T)$   for all $T>0$.
Now for any $\e \in( 0, 1)$, we define the $C^2$ function $g_\e (r): [0,\infty) \to (0,\infty)$ by
\begin{equation}\label{g epsilon}
g_\e(r)=
    \begin{cases}
    -\frac{r^2}{8\e^{3/2}} + \frac{3r}{4\e^{1/2}} + \frac{3\e^{1/2}}{8}, & r< \e \\
    \sqrt{r},  & r\geq \e.
    \end{cases}
\end{equation} 
Then we have
\begin{equation*}
\frac{dg_\e (r)}{dr}=
    \begin{cases}
    -\frac{r}{4\e^{3/2}}+\frac{3}{4\e^{1/2}}, & r < \e,\\
    \frac{1}{2\sqrt{r}}, & r \geq \e,
    \end{cases}
\quad \text{ and }\quad
    \frac{d^2g_\e(r)}{dr^2}=
    \begin{cases}
    -\frac{1}{4\e^{3/2}}, & r<\e, \\
  -  \frac{1}{4r^{3/2}}, & r \geq \e.
    \end{cases}
\end{equation*}
Clearly 
 $g_\e(r) \in C^2(\mathbb{R_+})$.  Now fix $\delta>0$ and let $\e^{2}<\delta/2$, then we have by \eqref{d|Z|^2} and It\^o's formula
\begin{align}\label{g epsilon ito}
     \nonumber   g_\e&(|Z_t|^{2})-g_\e(|Z_0|^{2}) \\   \nonumber    =&\,  \!\int_0^tI_{\{|Z_s|^{2}\geq \e\}}\!\bigg(\langle e_s,\beta_s \rangle + \frac{\sc^2(U_s)}{2r_s}(\tr(\Delta_s \Delta_s^\top ) -|\Delta_s^\top  e_s|^2)  + \frac{\rc^2(U_s)}{2r_s}(\tr(\alpha_s \alpha_s^\top ) -|\alpha_s^\top  e_s|^2)  \bigg)\d s  \\
   \nonumber      &+    \int_0^t I_{\{|Z_s|^{2}\geq \e\}} \sc(U_s)\langle e_s,\Delta_s \d B _s^1 \rangle + \int_0^t \rc(U_s) \langle e_s, \alpha_s\d B _s^2 \rangle \\
   \nonumber      &+   \int_0^t I_{\{|Z_s|^{2}<  \e\}} \left( \left(-\frac{r_{s}^{2}}{4\e^{3/2}}+\frac{3}{4\e^{1/2}}\right)\left(\langle Z_s,\beta_s \rangle + \tr(\Delta_s \Delta_s^\top )\right)-\frac{1}{2\e^{3/2}}|\Delta_s^{\top } Z_s|^2\right)\d s   \\
    &+  \int_0^t I_{\{|Z_s|^{2}<  \e\}} \left(-\frac{r_{s}^{2}}{4\e^{3/2}}+\frac{3}{4\e^{1/2}}\right)\langle Z_s, \Delta_s \d B _s^1 \rangle.
    \end{align}
Note that in the above, we used the facts that $\rc(U_t)=0$ and $\sc(U_t)=1$ when $r_t< \delta/2$.

We will  derive \eqref{radial process} by passing to the limit when $\e \to 0$ in \eqref{g epsilon ito}. To this end,
note that
\begin{equation}
\label{e1lem3.1proof}
\frac{1}{2\e^{1/2}}\leq -\frac{r}{4\e^{3/2}}+\frac{3}{4\e^{1/2}} \leq \frac{3}{4\e^{1/2}}
\end{equation} for  $0 \le  r \le \e$.
 Using Assumption \ref{b lipschitz} along with the facts that the function $\kappa$ is  
  bounded 
   and that $\mathcal{W}_1(\mu_{t},\nu_{t}) \le \E[|Z_{t}|] \le F(T) $ for any $t\in [0, T]$,  
   we get
\begin{equation}
\label{e2lem3.1proof}
I_{\{|Z_t|^{2}<\e\}} |\langle Z_t,\beta_t \rangle| \leq I_{\{|Z_t|^{2}<\e\}}(\kappa(|Z_{t}|)|Z_t|^2+L_1\mathcal{W}_1(\mu_t,\nu_t)|Z_t|)\leq \kappa_{0}\e+L_{1 }F(T)\e^{1/2}.
\end{equation} 
Furthermore, Assumption \ref{sigma lipschitz} along with the fact that $\|A\|^2\leq \tr(AA^\top )\leq d\|A\|^2$ for any  $A \in \R^{d \times d}$   shows that   on the set  $\{|Z_t|^{2}<\e \}$
\begin{equation}
\label{e3lem3.1proof}
\tr(\Delta_t\Delta_t^\top )\leq d\|\Delta_t\|^2\leq dL_2|Z_t|^2\leq dL_2\e.
\end{equation}
Similarly, Assumptions \ref{b lipschitz} and \ref{sigma lipschitz} imply that on the set $\{|Z_t|^{2}<\e \}$, we have
\begin{equation}
\label{e4lem3.1proof}
|\Delta_t^{\top } Z_t|^2\leq|Z_t|^2 \|\Delta_t\|^2\leq L_2\e^2.
\end{equation}
Combining  equations \eqref{e1lem3.1proof}--\eqref{e4lem3.1proof}, we derive
\begin{displaymath}
I_{\{|Z_s|^{2}<\e\}}   \left| \bigg(-\frac{r_{s}^{2}}{4\e^{3/2}}+\frac{3}{4\e^{1/2}}\bigg)\left(\langle Z_s,\beta_s \rangle + \tr(\Delta_s \Delta_s^\top )\right)-\frac{1}{4\e^{3/2}}|\Delta_s^{\top } Z_s|^2 \right| \le K \e^{1/2} \le K.
\end{displaymath} for some positive constant $K$ independent of $\e$ and $s\in [0, t]$.
As a result, it follows from   the dominated convergence theorem that as $\e \to 0$
\begin{equation}
\label{e5lem3.1proof}
\int_0^t I_{\{|Z_s|^{2}<\e\}} \bigg( \bigg(-\frac{r_{s}^{2}}{4\e^{3/2}}+\frac{3}{4\e^{1/2}}\bigg)\left(\langle Z_s,\beta_s \rangle + \tr(\Delta_s \Delta_s^\top )\right)-\frac{1}{4\e^{3/2}}|\Delta_s^{\top }Z_s |^2\bigg)\d s\to 0, \ \ \ \text{a.s.}
\end{equation}
Equations  \eqref{e1lem3.1proof} and \eqref{e4lem3.1proof} also enable us to derive
\begin{align*}
\E\bigg[ \int_0^t I_{\{|Z_s|^{2}<\e\}}  \left(-\frac{r_{s}^{2}}{4\e^{3/2}}+\frac{3}{4\e^{1/2}}\right)^{2} | \Delta_s^{\top }  Z_s|^{2}\d s\bigg] \le K t\e  \to 0
\end{align*}
as $\e \to 0$.
Therefore there exists a subsequence $\{\e_k\}$  where $\e_k \to 0$ as $k\to \infty$ such that
\begin{equation}
\label{e6lem3.1proof}\int_0^t I_{\{|Z_s|<\e\}} \left(-\frac{r}{4\e^{3/2}}+\frac{3}{4\e^{1/2}}\right)\langle Z_s, \Delta_s \d B_s^1 \rangle   \to 0\quad  \text{ a.s. }
\end{equation}
as $\e_k \to 0$.

Using similar  arguments as those for \eqref{e5lem3.1proof} and  \eqref{e6lem3.1proof}, we can show that along a
subsequence of $\{\e_{k}\}$ (still denoted by  $\{\e_{k}\}$), we have
\begin{align*}
&  \!\int_0^t I_{\{|Z_s|^{2}\geq \e\}} \!\bigg(\langle e_s,\beta_s \rangle + \frac{\sc^2(U_s)}{2r_s}(\tr(\Delta_s \Delta_s^\top ) -|\Delta_s^\top  e_s|^2)  + \frac{\rc^2(U_s)}{2r_s}(\tr(\alpha_s \alpha_s^\top ) -|\alpha_s^\top  e_s|^2)  \bigg)\d s \\ 
   &\hspace{1.72in}+ 
   \int_0^t I_{\{|Z_s|^{2}\geq \e\}} \sc(U_s)\langle e_s,\Delta_s \d B _s^1 \rangle + \int_0^t I_{\{|Z_s|^{2}\geq \e\}} \rc(U_s) \langle e_s, \alpha_s\d B _s^2\rangle  \\
    & \to \bigg[\!\int_0^t  I_{\{|Z_s| \neq 0\}} \!\bigg(\langle e_s,\beta_s \rangle + \frac{\sc^2(U_s)}{2r_s}(\tr(\Delta_s \Delta_s^\top ) -|\Delta_s^\top  e_s|^2)  + \frac{\rc^2(U_s)}{2r_s}(\tr(\alpha_s \alpha_s^\top ) -|\alpha_s^\top  e_s|^2)  \bigg)\d s \\ 
   &\hspace{1.72in}+
   \int_0^t I_{\{|Z_t| \neq 0\}}  \sc(U_s)\langle e_s,\Delta_s \d B _s^1 \rangle + \int_0^t  I_{\{|Z_t| \neq 0\}} \rc(U_s) \langle e_s, \alpha_s\d B _s^2 \rangle
\end{align*} a.s. as $\e_{k} \to 0$. This, together with  \eqref{e5lem3.1proof} and  \eqref{e6lem3.1proof},
gives \eqref{radial process}.
\end{proof}

\begin{proof}[Proof of Lemma \ref{A bound}] For simplicity of notation, we  denote $\sigma = \sigma(y)$,  $u = u(x,y), \Delta = \Delta(x,y)$, and $\alpha = \alpha(x,y)$ in this proof.
    Noting $\alpha=\Delta+2\sigma uu^\top $,
    we have
    \begin{align*}
        \tr(\alpha \alpha^\top )-|\alpha^\top e|^2=&\tr(\Delta \Delta^\top )+4\tr(\Delta(\sigma uu^\top )^\top )+\frac{4}{|\sigma^{-1}z|^2}\tr(zz^\top )-|\Delta^\top  e+2uu^\top \sigma^\top e|^2\\
        =& \tr(\Delta \Delta^\top )+\frac{4}{|\sigma^{-1}z|^2}\tr(\Delta(zz^\top (\sigma^{-1})^\top )^\top )+\frac{4}{|\sigma^{-1}z|^2}|z|^2\\
        &-|\Delta^\top e|^2-4\tr(\Delta e(uu^\top \sigma^\top e)^\top )-4|uu^\top \sigma^\top e|^2\\
        =& \tr(\Delta \Delta^\top )+\frac{4}{|\sigma^{-1}z|^2}\tr(\Delta^\top (zz^\top (\sigma^{-1})^\top ))+\frac{4}{|\sigma^{-1}z|^2}|z|^2\\&-|\Delta^\top e|^2-4\tr(\Delta^\top  zz^\top (\sigma^{-1})^\top )-\frac{4}{|\sigma^{-1}z|^2}4|z|^2\\
        =&\tr(\Delta \Delta^\top )-|\Delta^\top  e|^2,
    \end{align*}
    where we used the definitions $u=\sigma^{-1}z/|\sigma^{-1}z|$, and $e=z/|z|$.

    Assumption \ref{sigma lipschitz} gives us \begin{align*}
  \big| \tr(\Delta \Delta^\top )-|\Delta^{\top } e|^2| \le |\Delta|^{2} + |\Delta^{\top } e\big|^2 \le 2 |\Delta|^{2} \le 2 d\|\Delta\|^{2} \le 2 d L_{2} |x-y|^{2}.
\end{align*} The proof is complete.
    \end{proof}

\begin{proof}[Proof of Lemma \ref{nonexplosive condition}]
First note that Assumption \ref{b growth} gives us $|b(0,\delta_0)| \le L_{3}<\infty$, where $\delta_0$ is the Dirac measure at $0$.
Denote $X_t=X_t(0)$ throughout the proof.  
Then   It\^o's formula  gives
\begin{align*}
\d(|X_t|^2)=\left(2 \langle X_t, b(X_t,\mu_t)\rangle + \tr (\sigma(X_t)\sigma(X_t)^\top )\right)\d t + 2\langle X_t, \sigma(X_t)\d W_t \rangle.
\end{align*}
Consider  the function $g\in C^2([0,\infty))$
\begin{align*}
    g(s) : = \begin{cases}
  \sqrt s,    & \text{ if  } s \ge 1, \\
  -\frac{1}{8}s^{2} + \frac{3}{4} s + \frac{3}{8},   & \text{ if } 0\le s < 1,
\end{cases}
\end{align*}
which has derivatives
\begin{align*}
    g'(s) = \begin{cases}
    \frac{1}{2}s^{-1/2}, & \text{ if  } s \geq 1, \\
    -\frac{1}{4}s+\frac{3}{4}, & \text{ if } 0\leq s <1
    \end{cases}
\quad \text{ and }\quad
    g''(s)= \begin{cases}
    -\frac{1}{4}s^{-3/2}, & \text{ if  } s \geq 1, \\
    -\frac{1}{4}, & \text{ if  } 0\leq s < 1.
    \end{cases}
\end{align*}
Put   $\Tilde{e}_t=\frac{X_t}{|X_t|}$ for $X_{t} \neq 0$.  Then we have by It\^o's formula that
\begin{align*}
    \d&(g(|X_t|^2)) \\ &= I_{\{|X_t|<1\}}\left(\left(-\frac{|X_t|^2}{4}+\frac{3}{4}\right)\left(2\langle X_t,b(X_t,\mu_t)\rangle +\tr (\sigma(X_t)\right)\sigma(X_t)^\top )-\frac{1}{2}|\sigma(X_t)^{\top }X_t|^2\right)\d t \\
    &\ \ +I_{\{|X_t|\geq1\}}\left(\langle \Tilde{e}_t, b(X_t,\mu_t)\rangle +\frac{1}{2|X_t|}\tr (\sigma(X_t)\sigma(X_t)^\top )-\frac{1}{2}|\sigma(X_t)^\top  \Tilde{e}_t|^2\right)\d t
    +\d M_t,
\end{align*}
where 
 $M_t$ is a martingale.  Then, it follows that 
\begin{align}\label{e:E[g]differential}
\nonumber     \lefteqn{   \frac{\d}{\d t}\E[g(|X_t|^2)]} \\ \nonumber &= \E\left[ I_{\{|X_t|<1\}}\left(\left(-\frac{|X_t|^2}{4}+\frac{3}{4}\right)\left(2\langle X_t,b(X_t,\mu_t)\rangle +\tr (\sigma(X_t)\right)\sigma(X_t)^\top )-\frac{1}{2}|\sigma(X_t)X_t|^2\right)\right] \\
\nonumber    &\ \ +\E\left[I_{\{|X_t|\geq1\}}\left(\langle \Tilde{e}_t, b(X_t,\mu_t)\rangle +\frac{1}{2|X_t|}(\tr (\sigma(X_t)\sigma(X_t)^\top )-\frac{1}{2}|\sigma(X_t)^\top  \Tilde{e}_t|^2)\right)\right]\\
   &   \leq\E\left[I_{\{|X_t|\geq 1\}} \langle \Tilde{e}_t, b(X_t,\mu_t)\rangle + I_{\{|X_t|<1\}}\frac{3-|X_t|^2}{2}\langle X_t,b(X_t,\mu_t) \rangle +K_0\right],
\end{align}
where $K_0=\sup\{\tr(\sigma(x)\sigma(x)^\top )\}$,  
  which  is finite by Assumption \ref{bound 1}.  Now by Assumption \ref{b lipschitz}, we have
\begin{align*}
     \langle X_t, b(X_t,\mu_t)\rangle  
     &=\langle X_t,b(0,\delta_0)\rangle + \langle X_t, b(X_t,\mu_t)-b(0,\delta_0)\rangle\\
     &\leq \kappa(|X_t|)|X_t|^2+L_1 \W_1(\mu_t,\delta_0)|X_t|+\langle X_t,b(0,\delta_0)\rangle\\
     &\leq \kappa(|X_t|)|X_t|^2+L_1\E[|X_t|]|X_t|+|X_t||b(0,\delta_0)|.
\end{align*}
Thus
\begin{equation*}
    I_{\{|X_t|\geq 1\}} \langle \Tilde{e}_t, b(X_t,\mu_t)\rangle \leq I_{\{|X_t|\geq 1}\kappa(|X_t|))|X_t|+L_1\E[|X_t|]+|b(0,\delta_0)|,
\end{equation*}
and
\begin{align*}
    I_{\{|X_t|<1\}}\frac{3-|X_t|^2}{2}\langle X_t,b(X_t,\mu_t) \rangle& \leq   I_{\{|X_t|<1\}}\frac{3|X_t|-|X_t|^3}{2} (\kappa(|X_t|)|X_t|+L_1\E[|X_t|]+|b(0,\delta_0)|)\\ &\le I_{\{|X_t|<1\}}(\kappa_1+|b(0,\delta_0)|+L_1\E[|X_t|]),
\end{align*}
where  $\kappa_1=\sup_{x\in [0,1]}|\kappa(x)|$; note also that we used the fact that $3x-x^3\leq 2$ 
for $x\in[0,1]$  in the last inequality.  Now plugging these observations into \eqref{e:E[g]differential} leads to
\begin{align*}
    \frac{\d}{\d t}\E[g(|X_t|^2)] & \leq  \E\left[I_{\{|X_t|\geq 1\}}\left(\kappa(|X_t|)|X_t|+L_1\E[|X_t|]+|b(0,\delta_0)|\right)\right] \\
    & \quad + \E\left[I_{\{|X_t|\leq1\}} \left(\kappa_1+|b(0,\delta_0)|+L_1\E[|X_t|]\right)\right]  +K_0 \\
 &   \leq  \E\left[I_{\{|X_t|\geq 1\}}\kappa(|X_t|)|X_t| +L_1\E[|X_t|]+C_1\right],
\end{align*}
where $C_1=K_0+\kappa_1+|b(0,\delta_0)|$.  Now using $R_0$ from \eqref{nonexplosive condition} we can define    $\kappa_{R_0}=\sup_{x\in[0,R_0]}|\kappa(x)|$ and $C_2=C_1+\kappa_{R_0}R_0$.  Then
\begin{align*}
    \frac{\d}{\d t}\E[g(|X_t|^2)]&\leq \E\left[I_{\{|X_t|>R_{0}\}}(-L_1-K)|X_t|+\kappa_{R_{0}}R_{0}+L_1\E[|X_t|]+C_1\right]\\ & \leq\E\left[I_{\{|X_t|>R_{0}\}}(-L_1-K)|X_t|+L_1\E[|X_t|]+C_2\right]  \\
    &= \E\left[(-L_1-K)|X_t|+I_{\{|X_t|\leq R_{0}\}}(L_1+K)|X_t|+L_1\E[|X_t|]+C_2\right]  \\
    &\leq \E\left[(-L_1-K)|X_t|+I_{\{|X_t|\leq R_{0}\}}(L_1+K)R_{0}+L_1\E[|X_t|]+C_2\right] \\
    & \le -K\E[|X_t|]+C_3,
\end{align*}
where  $C_3=C_2+(L_1+K_1)R_{0}$.
Furthermore, using the elementary inequality $x \ge -\frac18 x^{4} +\frac34 x^{2} $ for $x\in [0,1]$, we have
\begin{align*} |X_t| &=I_{\{|X_t|<1\}}|X_t|+I_{\{|X_t|\geq 1\}}|X_t| \\ &\geq I_{\{|X_t|<1}\left(-\frac{1}{8}|X_t|^4+\frac{3}{4}|X_t|^2\right)+I_{\{|X_t|\geq1\}}\sqrt{|X_t|^2}\\ & \geq g(|X_t|^2)-\frac{3}{8}. \end{align*}
Consequently, it follows that
\[\frac{\d}{\d t}\E[g(|X_t|^2)]\leq -K\E[g(|X_t|^2)+C_4\]
where $C_4=C_3+\frac{3}{8}K$.  By Gronwall's inequality \cite[lemma 2.2]{ChenLi-89} we have
\[\E[g(|X_t|^2)]\leq \frac{C_4}{K}(1-e^{-Kt})<\infty,\ \ \text{ for }t\ge 0.\]
Last, we note that $x \le g(x^{2}) +1$ for all $x\in \R$ and hence
\[\sup_{t\ge 0}\E[|X_t|]\leq 1+\sup_{t\ge 0} \E[g(|X_t|^2)]<\infty.\]
    The proof is complete.
\end{proof}

\def\cprime{$'$} \def\polhk#1{\setbox0=\hbox{#1}{\ooalign{\hidewidth
  \lower1.5ex\hbox{`}\hidewidth\crcr\unhbox0}}}

\end{document}